\documentclass[fullpage]{amsart}
\usepackage{amssymb,amstext,amsmath,latexsym,amsthm, amscd, mathtools}
\usepackage{graphicx}
\DeclareGraphicsExtensions{.jpg}
\newcommand\C{\mathbb{C}}
\newcommand\R{\mathbb{R}}
\newcommand\Q{\mathbb{Q}}
\newcommand\Z{\mathbb{Z}}

%
\newcommand\field{K}
\newcommand\ring{\mathcal{O}}
\newcommand\discr{D_K}
\newcommand\diff{\mathfrak{d}}

\newcommand\primep{\mathfrak{p}}

\newcommand\ideala{\mathfrak{a}}
\newcommand\idealb{\mathfrak{b}}
\newcommand\idealn{\mathfrak{n}}
\newcommand\degr{r}
%

\theoremstyle{plain}
\newtheorem{thm}{Theorem}[section]

\newtheorem{lem}[thm]{Lemma}

\theoremstyle{definition}

\theoremstyle{remark}
\newtheorem{rem}[thm]{Remark}
\numberwithin{equation}{section}
\title{$p$-adic measures for reciprocals of $L$-functions of totally real number fields}
\date{}
\author{Razan Taha}
\begin{document}
\begin{abstract}
	We construct $p$-adic measures which interpolate the special values of reciprocals of $p$-adic $L$-functions of totally real number fields $K$ at negative integers. These measures are defined by analyzing the non-constant term of partial Eisentein series of the Hilbert modular group of $K$. 
\end{abstract}
\maketitle
\tableofcontents
\section{Introduction}
Let $K$ be a totally real number field of finite degree over $\Q$ and let $K(p^\infty)$ be  the maximal abelian unramified-outside-$p$ extension of $K.$ The goal of this paper is to construct a $p-$adic measure $\lambda$ on the Galois group $G\left(K(p^\infty)/K\right)$ from the non-constant terms of the Fourier expansion of holomorphic Eisenstein series on $SL_2(\ring),$ where $\ring$ is the ring of integers of $K$. Moreover, the Mellin transform of $\lambda$ turns out to be the reciprocal of the $p-$adic $L-$function of the totally real field $K$. This $L$-function is the $p$-adic analogue of the classical complex valued Hecke $L$-function and has several different constructions which we will briefly describe.

The study of $p$-adic $L$-functions starts with the fact that the values of the Riemann zeta function at negative integers are rational and can be expressed in terms of Bernoulli numbers as
\begin{equation*}
\zeta(1-k)=(-1)^{k+1}\frac{B_k}{k}, \,\,\, \forall k>0.
\end{equation*} 
Kummer \cite{Kummer} proved that these values satisfy special congruences modulo powers of a prime number p: given two even integers $k$ and $k'$ which are not divisible by $p-1$ and for all $d>0,$ if $k \equiv k' \bmod p^{d-1}(p-1)$ then
\begin{equation}
(1-p^{k-1})\zeta(1-k) \equiv (1-p^{k'-1})\zeta(1-k') \bmod p^d.
\end{equation}
Kubota and Leopoldt \cite{KL} were the first to interpret the Kummer congruences as a $p$-adic function which interpolates the values of the Riemann zeta function at negative integers. They proved that there exists a continuous function $L_p$ on $\Z_p$ such that for all positive integers $k$ which are congruent to $0 \bmod p-1,$
\begin{equation} \label{eq14}
L_p(\chi, 1-k) = L(\chi,1-k)(1-\chi(p)p^{k-1})
\end{equation}
where the right hand side of (\ref{eq14}) is the Dirichlet $L$-function with the Euler factor at $p$ removed. 

For a totally real field $\field,$ the Hecke $L$-function attached to a character $\chi$ with conductor $\mathfrak{n}$ is given by 
\begin{equation}
L(s,\chi)=\sum_{(\mathfrak{a},\mathfrak{n})=1}\frac{\chi(\mathfrak{a})}{N(\mathfrak{a})^s}  = 
\sum_{[\mathfrak{a}]}\chi([\mathfrak{a}])\sum_{\mathfrak{b} \in [\mathfrak{a}]}\frac{1}{N(\mathfrak{b})^s}
\end{equation}
where $[\mathfrak{a}]$ denotes the class of the ideal $\mathfrak{a}.$
Siegel \cite{Siegel} proved that the values of the partial $L$-function $\sum_{\mathfrak{b} \in [\mathfrak{a}]}N(\mathfrak{b})^{k-1}$ are rational, which led to several different constructions of the $p$-adic $L$-function for a totally real field. Serre \cite{Serre} constructed $p$-adic modular forms and obtained the $p$-adic $L$-function as the constant term of a $p$-adic Eisenstein series. Shintani \cite{Shintani} proved an elementary formula for this rational number in terms of certain generalized Bernoulli numbers, which Cassou-Nogues \cite{CN} then used to define the p-adic L-function in terms of generalized Bernoulli numbers. Coates and Sinnott \cite{CS} equivalently proved the generalized Kummer congruences for real quadratic fields and Deligne and Ribet \cite{DR} proved them in general for any totally real field. 

One long term goal of this project is to study $p$-adic $L$-functions using methods inspired by the Langlands program. In the complex setting, the Langlands-Shahidi method establishes the analytic continuation and functional equations of many $L$-functions by studying the non-constant terms of the Fourier expansion of Eisenstein series on reductive groups. In particular, the denominators of these Fourier coefficients contain suitable $L$-functions. This inspired Gelbart, Miller, Panchishkin, and Shahidi to introduce an analog of parts of the Langlands-Shahidi method to the $p$-adic setting in \cite{GMPS}. They proved the existence of a $p$-adic measure $\mu^*$ which can be expressed in terms of the non-zero Fourier coefficients of classical holomorphic Eisenstein series such that its Mellin transform is the reciprocal of the $p$-adic Riemann $\zeta$-function.

In this paper, we generalize the result of \cite{GMPS} to the case of totally real number fields. We now state our main theorem:
\begin{thm} \label{thm1}
	Let $p \in \Q$ be an odd prime number such that $p$ does not divide $[K:\Q]$ or the class number of $K(e^{2\pi i/p})$. Assume further that no prime $\wp$ of the field $K(e^{2\pi i /p}+e^{-2\pi i/p})$ lying above $p$ splits in $K(e^{2\pi i/p}).$ Let $\primep$ be a prime ideal of $\ring$ lying above $p$ and $h^+$ the strict class number of $K.$ 
	\begin{enumerate}
		\item There exists a $p$-adic measure $\lambda$ on $G\left(K(p^\infty)/K\right)$ whose Mellin transform is the reciprocal of the $p$-adic $L$-function of the totally real field $K$
		\begin{equation*}
		\int_{G\left(K(p^\infty)/K\right)} N(x)^{k-1}\, d\lambda = h^+ (1-N(\primep)^{k-1})^{-1} \zeta(1-k)^{-1}
		\end{equation*}
		for all even positive integers $k.$ 
		\item For any non-trivial Hecke character of finite type $\chi \mod \primep^m$ with $m>0$ , we have
		\begin{equation*}
		\int_{G\left(K(p^\infty)/K\right)}\chi(x) N(x)^{k-1}\, d\lambda = h^+ L(1-k,\chi)^{-1}
		\end{equation*} 
		for any positive integer $k$ satisfying the parity condition $\chi(-1)=(-1)^k$.
	\end{enumerate}
Moreover, the measure $\lambda$ can be expressed in terms of Fourier coefficients of the Eisenstein series defined by $\chi$ on the Hilbert upper half plane. 
\end{thm}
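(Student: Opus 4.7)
The plan is to adapt the framework of \cite{GMPS} from $SL_2(\Z)$ to the Hilbert modular group $SL_2(\ring)$. The central object will be the holomorphic Eisenstein series $E_k(z,\chi)$ on the Hilbert upper half plane attached to $\chi$, whose Fourier expansion has constant term proportional to $L(1-k,\chi)$ (by the classical formulas of Hecke and Siegel) and non-constant Fourier coefficients $c_\nu(\chi,k)$ indexed by totally positive $\nu\in\ring$ that are essentially generalized divisor sums $\sigma_{k-1,\chi}(\nu)=\sum_{\idealb\mid(\nu)}\chi(\idealb)N(\idealb)^{k-1}$ weighted by strict ideal class data.

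I would first form the $\primep$-stabilized series $E_k^{(\primep)}$ whose constant term has the Euler factor at $\primep$ removed, matching the right-hand sides of the theorem (in part (2) this is automatic since $\primep$ divides the conductor). Next I would formally invert $E_k^{(\primep)}$ as a $q$-expansion: the series $F_k(z,\chi)=1/E_k^{(\primep)}(z,\chi)$ has constant term $h^+(1-\chi(\primep)N(\primep)^{k-1})^{-1}L(1-k,\chi)^{-1}$, with $h^+$ arising from decomposing the $L$-function into sums over narrow ideal classes, and non-constant Fourier coefficients $\widetilde c_\nu(\chi,k)$ that are polynomial expressions in the $c_\nu(\chi,k)$. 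Class field theory identifies $G(K(p^\infty)/K)$ with the inverse limit of narrow ray class groups of $K$ of conductor a power of $\primep$, under which each totally positive $\nu\in\ring$ maps to a Galois element, so one can define $\lambda$ by prescribing its integrals against locally constant functions on $G(K(p^\infty)/K)$ in terms of the $\widetilde c_\nu$. The Mellin transform identities of parts (1) and (2) will then reduce to the statement that $\int \chi(x)N(x)^{k-1}\,d\lambda$ extracts the constant term of $F_k(z,\chi)$, which was engineered to equal the right-hand side; the final sentence of the theorem is immediate from the recursion defining $\widetilde c_\nu$ in terms of $c_\nu$.

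The main technical obstacle is to check that this prescription actually yields a bounded $p$-adic measure: one needs the family $\{\widetilde c_\nu(\chi,k)\}$ to satisfy the Kummer-type distribution relations as the weight $k$ and character $\chi$ vary. The original Fourier coefficients $c_\nu(\chi,k)$ satisfy the generalized Kummer congruences of Deligne--Ribet, and these will transfer through the inversion formulas to the $\widetilde c_\nu$ provided the inversion takes place inside a $p$-adically integral ring of $q$-expansions, which requires the stabilized constant term $(1-\chi(\primep)N(\primep)^{k-1})L(1-k,\chi)$ to be a $p$-adic unit. This is where the three hypotheses on $p$ are used: $p\nmid[K:\Q]$ controls the rational denominators from the Hecke normalization, while $p\nmid h(K(e^{2\pi i/p}))$ together with the non-splitting condition on primes of $K(e^{2\pi i/p}+e^{-2\pi i/p})$ above $p$ provide a totally real Ferrero--Washington-type input for the $p$-adic nonvanishing of these $L$-values at negative integers. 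Once invertibility is secured, propagating the Deligne--Ribet congruences through the recursive formulas for $\widetilde c_\nu$ should be a bookkeeping argument closely paralleling \cite{GMPS}.
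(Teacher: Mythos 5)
Your proposal diverges from the paper at the central step, and the divergence introduces a genuine gap. You propose to obtain the reciprocal of the $L$-function by formally inverting the $q$-expansion of a $\primep$-stabilized Eisenstein series, so that the coefficients $\widetilde c_\nu$ of $1/E_k^{(\primep)}$ are polynomial expressions in the original coefficients, and then to ``prescribe'' the integrals of $\lambda$ in terms of the $\widetilde c_\nu$. Two things are missing there. First, a $p$-adic measure is not defined by prescribing its integrals against the functions $\chi(x)N(x)^{k-1}$; you must give its values on the basic open sets $\ideala+\primep^m\ring_{\primep}$ and verify the finite-additivity (distribution) relations as $m$ grows, and nothing in the inversion recursion suggests the $\widetilde c_\nu$ satisfy these. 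In the paper this compatibility is exactly what forces the specific choice --- the $\primep^m$-th Fourier coefficient of the partial Eisenstein series $\varepsilon_{k,\primep^m,\idealb_j}$, normalized by $N(\idealb_j)^{-k}$, plus a Haar-measure correction $\gamma(k)\lambda_{Haar}$ in the even-weight case --- and it is then verified by explicit exponential-sum computations. Second, asserting that the Mellin transform ``extracts the constant term of $F_k$'' is circular: that is precisely the identity to be proved, and in the paper it requires evaluating Gauss sums, the functional equation of $L(s,\chi)$, and the relation $\tau(\chi)\tau(\overline\chi)=\chi(-1)N(\primep^m)$.

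The paper's actual mechanism is different and avoids inverting $q$-expansions altogether: the coprimality condition $\gcd(c/\ideala,d/\ideala)=1$ in the lattice sum defining the partial Eisenstein series is removed by M\"obius inversion over integral ideals (Lemma \ref{reqlem}), so the non-constant Fourier coefficients already carry the factor $\sum_{\mathfrak t}\mu(\mathfrak t)N(\mathfrak t)^{-k}$, i.e.\ the reciprocal of the $L$-function at $s=k$, which the functional equation converts into $L(1-k,\chi)^{-1}$. Likewise, boundedness is not obtained by propagating Deligne--Ribet congruences through an inversion of coefficients: the paper shows that the Deligne--Ribet measure $\mu_{1,\mathfrak c}$ is a unit in the Iwasawa algebra because the branch constant terms $(1-\omega^{-i}(\mathfrak c)N(\mathfrak c))L(0,\omega^{-i})$ are $p$-adic units by Greenberg's generalization of Kummer's criterion --- this, not a Ferrero--Washington-type nonvanishing statement, is where all three hypotheses on $p$ enter --- and then identifies $\overline\lambda$ with $h^+N(x)^{k-1}\mu^*$ as distributions, where $\mu^*$ is a regularization of $\mu_{1,\mathfrak c}^{-1}$. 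Your sketch gestures at the right ingredients, but as written neither the construction of $\lambda$ nor its boundedness would go through.
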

\begin{rem}
	The conditions on $p$ are those that ensure that Kummer's criterion holds for $K$ (\cite{Greenberg}). Greenberg mentions in the introduction to {\it loc. cit.} that the condition $p \nmid [K:\Q] $ may be relaxed. However, the other conditions are the regularity conditions on $p$ and cannot be removed. 
\end{rem}
The paper is organized as follows: In Section \ref{sectionHMF}, we give a brief overview of Hilbert modular forms in the classical setting and we define holomorphic Hilbert Eisenstein series. In Section \ref{FCES}, we review the Fourier expansion of holomorphic Eisenstein series of weight $k$ and level $\mathfrak{n}.$ In Section \ref{construction}, we use the non-constant terms of this Fourier expansion to construct a $p$-adic distribution $\lambda.$ This is the main construction of the paper. Moreover, we show that $\lambda$ satisfies the properties given in Theorem \ref{thm1}. In Section \ref{proof}, we prove that $\lambda$ is in fact a $p$-adic measure.
\newline \newline
{\bf Acknowledgement.} The author would like to thank Andrei Jorza, Ellen Eischen, and A. Raghuram for their helpful conversations. She would also like to thank Romyar Sharifi for his explanation of the structure of the Iwasawa algebra. This work is partially supported by Freydoon Shahidi's NSF grants DMS-1500759 and DMS-1801273 as a part of the thesis written under him at Purdue University.  
\section{Hilbert Modular Forms} \label{sectionHMF}
In this section, we set up the main definitions and results that we need from the theory of Hilbert modular forms. A thorough treatment of the subject can be found in \cite{Freitag}, for example. 
\subsection{Structure of the Hilbert Modular Group}
Let $\field$ be a totally real number field of degree $\degr$ over $\Q.$ Let $\ring$ denote the ring of integers of $\field,$ $\diff$ its different ideal, $\discr$ its discriminant, and $h$ its class number. Let $N:\field \rightarrow \Q$ denote the norm map on $\field$. 

We fix an ordering of the $r$ embeddings $K \hookrightarrow \R.$ For any $\alpha \in \field,$ we denote by $\alpha^{(1)},\cdots, \alpha^{(\degr)}$ the conjugates of $\alpha.$ We say $\alpha$ is totally positive, denoted $\alpha \gg 0,$ if $\alpha^{(i)} \geq 0$ for all $1 \leq i \leq \degr.$

The Hilbert modular group $SL_2(\ring)$ acts discontinuously on the product of $\degr$ copies of the upper half plane $\mathfrak{h}$ by Mobius transformations componentwise: for any $A=
\begin{psmallmatrix} a & b \\ c & d \end{psmallmatrix} \in SL_2(\ring)$ and any $z=(z_1,\cdots, z_{\degr}) \in \mathfrak{h}^{\degr},$
\begin{equation}
A \cdot z = \left(\frac{a^{(1)}z_1+b^{(1)}}{c^{(1)}z_1+d^{(1)}}, \cdots, \frac{a^{(\degr)}z_{\degr}+b^{(\degr)}}{c^{(\degr)}z_{\degr}+d^{(\degr)}}\right).
\end{equation}
This produces an infinite number of cusps but only a finite number of inequivalent cusps.
\begin{thm}{(\cite{Freitag} pg. 37 Corollary $3.5_1$)}
The equivalence classes of cusps $\kappa$ under the action of $SL_2(\ring)$ are in one to one correspondence with the ideal classes of $\field.$ In particular, the number of inequivalent cusps is $h$. 	
\end{thm}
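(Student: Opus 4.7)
The plan is to construct an explicit $SL_2(\ring)$-invariant map from the set of cusps to the ideal class group $\mathrm{Cl}(\field)$ and then show that it descends to a bijection on orbits. Given a cusp $\kappa \in \mathbb{P}^1(\field)$, write it in homogeneous coordinates as $[\alpha : \gamma]$ with $\alpha, \gamma \in \ring$ (any representative in $\field$ can be cleared of denominators, and the cusp at infinity is $[1 : 0]$), and set
\[
\Phi(\kappa) := [\alpha \ring + \gamma \ring] \in \mathrm{Cl}(\field).
\]

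Well-definedness on $\mathbb{P}^1(\field)$ is immediate: rescaling $(\alpha, \gamma)$ by $t \in \field^\times$ multiplies the ideal by the principal ideal $(t)$. For $SL_2(\ring)$-invariance, let $A = \begin{psmallmatrix} a & b \\ c & d \end{psmallmatrix} \in SL_2(\ring)$; the inclusion $(a\alpha + b\gamma)\ring + (c\alpha + d\gamma)\ring \subseteq \alpha \ring + \gamma \ring$ is clear, and applying $A^{-1} \in SL_2(\ring)$ gives the reverse inclusion. Hence $\Phi$ descends to $SL_2(\ring) \backslash \mathbb{P}^1(\field) \to \mathrm{Cl}(\field)$.

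Surjectivity is routine: every class in $\mathrm{Cl}(\field)$ has an integral representative $\ideala$, and since $\ring$ is a Dedekind domain, $\ideala$ is generated by at most two elements $\alpha, \gamma$, so the cusp $[\alpha : \gamma]$ maps to $[\ideala]$.

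The main content, and the step I expect to be the hardest, is injectivity. Suppose $\Phi(\kappa_1) = \Phi(\kappa_2)$; after rescaling one representative I may assume $\ideala := \alpha_1 \ring + \gamma_1 \ring = \alpha_2 \ring + \gamma_2 \ring$. Each pair $(\alpha_i, \gamma_i)$ defines a surjection $\pi_i \colon \ring^2 \twoheadrightarrow \ideala$, whose kernel is a rank-one projective $\ring$-module; comparing Steinitz classes in the exact sequence $0 \to \ker \pi_i \to \ring^2 \to \ideala \to 0$ shows that both kernels lie in the class $[\ideala]^{-1}$. Since rank-one projective $\ring$-modules are classified by their Steinitz class, the two kernels are isomorphic, and choosing splittings (which exist because $\ideala$ is projective) yields an element $g \in GL_2(\ring)$ with $g \cdot \kappa_1 = \kappa_2$ projectively. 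The remaining subtle point is to promote $g$ to an element of $SL_2(\ring)$, since $\det g$ is only guaranteed to lie in $\ring^\times$ and a general unit is not a square in $\ring$. One absorbs the determinant by composing with a matrix in the stabilizer of $\kappa_1$ in $GL_2(\field)$, which is a semidirect product of an additive translation group by $\ring^\times$; the richness of the unit group of a totally real field ensures this adjustment can be made within $\ring$, landing in $SL_2(\ring)$. This last step is the main technical obstacle and is precisely what is carried out in \cite{Freitag}.
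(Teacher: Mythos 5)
The paper offers no proof of this statement --- it is imported verbatim from Freitag, and the only trace of an argument in the text is the description of the correspondence $(a:b)\mapsto[\langle a,b\rangle]$, which is exactly your map $\Phi$. Your well-definedness, $SL_2(\ring)$-invariance, and surjectivity arguments are correct and match what the paper asserts.

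The issue is injectivity, which you rightly identify as the substance of the theorem but do not actually finish. Having produced $g\in GL_2(\ring)$ via Steinitz classes and splittings, you dispose of the passage from $GL_2(\ring)$ to $SL_2(\ring)$ by appealing to ``the richness of the unit group of a totally real field'' and to Freitag itself; citing the source for the decisive step is not a proof, and the heuristic is a misdiagnosis --- no property of $\ring^\times$ beyond its existence is used, and the step works over any Dedekind domain. It is completable with the machinery you already set up: writing $\ring^2=\ker\pi_1\oplus s_1(\ideala)$, the automorphism $h$ acting by $(\det g)^{-1}$ on $\ker\pi_1$ and by the identity on $s_1(\ideala)$ lies in $GL_2(\ring)$, fixes the line of $\kappa_1$, and has determinant $(\det g)^{-1}$, so $gh\in SL_2(\ring)$ carries $\kappa_1$ to $\kappa_2$. (A smaller convention point: the line $\ker\pi_i\otimes\field$ is $[-\gamma_i:\alpha_i]$, not $[\alpha_i:\gamma_i]$; conjugate by $\begin{psmallmatrix}0&-1\\1&0\end{psmallmatrix}$ or use the surjection $(x,y)\mapsto\gamma_i x-\alpha_i y$ instead.) Alternatively, the entire injectivity argument can be done by hand, which is closer to Freitag and to how the paper later builds the matrices $A$: since $\alpha_i\ideala^{-1}+\gamma_i\ideala^{-1}=\ring$, choose $\beta_i,\delta_i\in\ideala^{-1}$ with $\alpha_i\delta_i-\beta_i\gamma_i=1$ and set $A_i=\begin{psmallmatrix}\alpha_i&\beta_i\\\gamma_i&\delta_i\end{psmallmatrix}\in SL_2(\field)$, which sends $\infty$ to $\kappa_i$; then $A_2A_1^{-1}$ has determinant $1$ and all entries in $\ideala\cdot\ideala^{-1}=\ring$, hence lies in $SL_2(\ring)$ and maps $\kappa_1$ to $\kappa_2$. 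With either completion your proof is correct.
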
 
The correspondence is easy to describe: the cusp represented by $(a:b) \in \mathbb{P}^1(K)$ is mapped to the class of the ideal $\langle a,b \rangle .$ This implies that the cusp $\kappa=\infty$ which is represented by $(1:0)\in \mathbb{P}^1(K)$ corresponds to the trivial ideal class consisting of all principal ideals of $\field$. 
\subsection{Modular Forms on Congruence Subgroups}
For $k=(k_1, \cdots, k_\degr) \in \Z^\degr, \, z=(z_1, \cdots, z_\degr) \in \C^\degr,$ and $c,d \in \field,$ we write 
\begin{align}
Tr(k) = \sum_{i=1}^{\degr} k_i \qquad &\qquad  Tr(z) = \sum_{i=1}^{\degr} z_i \\
(cz+d)^k &=  \prod_{i=1}^{\degr} (c^{(i)}z_i+d^{(i)})^{k_i}
\end{align}
The {\it slash operator} $\mid_k$ is defined for a complex valued function $f$ on $\mathfrak{h}^\degr$ and an element $\begin{psmallmatrix} * & * \\ c & d \end{psmallmatrix} \in SL_2(\ring)$ by 
\begin{equation}
(f \mid_k \begin{psmallmatrix} * & * \\ c & d \end{psmallmatrix})(z)=(cz+d)^{-k} f(\begin{psmallmatrix} * & * \\ c & d \end{psmallmatrix} \cdot z).
\end{equation} 
Given a congruence subgroup $\Gamma \subset SL_2(\ring),$ we say that $f$ is a Hilbert modular form of weight $k=(k_1, \cdots, k_\degr)$ and level $\Gamma$ if $f \mid_k \gamma =f$ for all $\gamma \in \Gamma.$ We will only be interested in the case when $k_1=k_2=\cdots=k_\degr=k$ and we will say the Hilbert modular form is of (parallel) weight $k$. One basic example of Hilbert modular forms is the Eisenstein series attached to a character of $\field^*$. We denote the space of all the Hilbert modular forms of weight $k$ and level $\Gamma$ by $\mathcal{M}_k(\Gamma).$ 

We will be most interested in the congruence subgroups
\begin{align} \label{eq10}
\Gamma_0(\mathfrak{n}) &= \left \{ \gamma = 
\begin{pmatrix}
a & b \\ c & d 
\end{pmatrix}
\in SL_2(\ring) \, \middle| \, \gamma \equiv 
\begin{pmatrix}
* & * \\ 0 & * 
\end{pmatrix}
\bmod \mathfrak{n}
\right \} \\  \label{eq11}
\Gamma_1(\mathfrak{n}) &= \left \{ \gamma = 
\begin{pmatrix}
a & b \\ c & d 
\end{pmatrix}
\in SL_2(\ring) \, \middle| \, \gamma \equiv 
\begin{pmatrix}
1 & * \\ 0 & 1 
\end{pmatrix}
\bmod \mathfrak{n}
\right \} 
\end{align}
of $SL_2(\ring),$ where $\mathfrak{n}$ is an integral ideal of $\field.$ 

If $f \in \mathcal{M}_k(\Gamma_0(\idealn)) $ or $f \in \mathcal{M}_k(\Gamma_1(\idealn)), $ it has a Fourier expansion at the cusp $\infty$ of the form
\begin{equation}
	f(z)=\sum_{\substack{\nu \equiv 0 \bmod (\diff\idealn)^{-1} \\ \nu \gg 0}} a(\nu)\exp(\nu z)
\end{equation}
where $\exp(\nu z)=e^{2\pi i Tr(\nu z) }.$

For any general cusp $\kappa,$ there is a matrix $A=\begin{psmallmatrix} \alpha_1 & \alpha_2 \\ \alpha_3 & \alpha_4 \end{psmallmatrix}$ of determinant 1 such that $A\kappa = \infty$ (\cite{Klingen} pg. 181). Moreover, $A$ can be chosen so that
\begin{equation} 
\kappa = - \frac{\alpha_4}{\alpha_3}, \quad \mathfrak{b}=\langle \alpha_3,\alpha_4 \rangle , \quad \mathfrak{b}^{-1}=\langle \alpha_1,\alpha_2 \rangle , \quad \gcd (\mathfrak{n},\mathfrak{b})=1.
\end{equation}
In this case, $f$ admits a Fourier expansion at $\kappa=A^{-1}\infty$ of the form 
\begin{equation} \label{eq7}
(\alpha_3 A^{-1}z+\alpha_4)^{k}f(A^{-1}z)=\sum_{\substack{\nu \equiv 0 \bmod \idealb^2(\diff\idealn)^{-1} \\ \nu \gg 0}} a_A(\nu)\exp(\nu z).
\end{equation}

\subsection{Eisenstein Series}

Recall that $\mathfrak{n}$ is an integral ideal of $\field.$ A grossencharacter $\chi$ is said to be of finite type with conductor $\mathfrak{n},$ denoted $\chi \bmod \mathfrak{n},$ whenever $\chi$ is a character $\left(\ring/\mathfrak{n}\right)^* \to \C^*$. 

Given a grossencharacter $\chi \bmod \mathfrak{n}$ of finite type, an integer $k \geq 3, $ and a cusp $\kappa$ of $\Gamma_0(\mathfrak{n}),$ the weight $k$ holomorphic Eisenstein series for $\Gamma_0(\mathfrak{n})$ at $\kappa$ is given by 
\begin{equation} \label{eq12}
E_{k,\mathfrak{n}}(\chi,z, \kappa) = \sum_{\gamma \in \Gamma_0(\mathfrak{n})_{\kappa} \backslash \Gamma_0(\mathfrak{n}) } \text{Im}(\gamma z)^k
\end{equation} 
for any $z \in \mathfrak{h}^r.$ Here, for any congruence subgroup $\Gamma,$ $\Gamma_{\kappa}$ denotes the stabilizer of $\kappa$ inside $\Gamma.$ 

If $E_{k,\mathfrak{n}}^{\Gamma_1}(\chi,z, \kappa_1)$ denotes the weight $k$ holomorphic Eisenstein series for $\Gamma_1(\mathfrak{n})$ at a cusp $\kappa_1$ lying above $\kappa,$ we have 
\begin{equation}
E_{k,\mathfrak{n}}(\chi,z, \kappa) = \sum_{\gamma = 
	\left( \begin{smallmatrix}
	a & b \\ c & d 
	\end{smallmatrix}  \right)
	\in \Gamma_1(\mathfrak{n}) \backslash \Gamma_0(\mathfrak{n}) }  \chi(\langle d\rangle) E_{k,\mathfrak{n}}^{\Gamma_1}(\chi,z, \kappa_1) .
\end{equation}

We can describe the Eisenstein series explicitly at the cusp $\kappa=\infty.$ This cusp corresponds to the trivial ideal class so all the ideals are principal. Two matrices in $SL_2(\ring)$ are left-equivalent under $\Gamma_\infty$ if and only if they have the same bottom row $(c,d)\in \ring \times \ring.$ Fix a principal ideal $\mathfrak{a}$ which is not necessarily integral. A tuple $(c,d)$ is the bottom row of a matrix in $\Gamma_0(\mathfrak{n})$ or $\Gamma_1(\mathfrak{n})$ precisely when $c,d \in \mathfrak{a},$ $\gcd \left(\frac{c}{\mathfrak{a}},\frac{d}{\mathfrak{a}} \right)=1,$ and the tuple satisfies the congruence condition (\ref{eq10}) and (\ref{eq11}) of the respective subgroup (\cite{BMP} pgs. 706 Equation (46)). Since $\mathfrak{a}=\langle \alpha \rangle$ is principal, every element $a \in \mathfrak{a}$ can be written as $a=a^*\alpha,$ and we use the notation $\frac{a}{\mathfrak{a}}$ to denote $a^*.$ Choose a set of representatives $a_2 \in \mathfrak{a}$ which runs over all the congruence classes modulo $\mathfrak{na}.$ Now the sum in Equation (\ref{eq12}) at the cusp $\kappa=\infty$ can be written as     
\begin{align} \label{eq15}
E_{k,\mathfrak{n}}(\chi,z)\coloneqq E_{k,\mathfrak{n}}(\chi,z, \infty) &= \sum_{a_2 \in \left(\mathfrak{a} / \mathfrak{na} \right)^{\times}}  \chi(\langle a_2 \rangle) 
\sum_{\substack{\{c,d\} \in \mathfrak{a}\times \mathfrak{a} \\ \gcd \left(\frac{c}{\mathfrak{a}},\frac{d}{\mathfrak{a}} \right)=1 \\ \mathfrak{n} | c \\ d \equiv a_2 \bmod \mathfrak{n}\mathfrak{a} }} \frac{N(\mathfrak{a})^k}{(cz+d)^{k}} \\ 
&= \sum_{a_2 \in \left(\mathfrak{a} / \mathfrak{na} \right)^{\times}}  \chi(\langle a_2 \rangle) \sum_{\substack{\{c,d\} \in \mathfrak{a}\times \mathfrak{a} \\ \gcd \left(\frac{c}{\mathfrak{a}},\frac{d}{\mathfrak{a}} \right)=1 \\d \equiv a_2 \bmod \mathfrak{n}\mathfrak{a}}} \frac{N(\mathfrak{a})^k}{(\mathfrak{n}cz+d)^{k}} .
\end{align} 
The notation $\{c,d\}$ means that two tuples $(c,d)$ and $(c',d')$ are identified together if there exists $\varepsilon \equiv 1 \bmod \mathfrak{n}$ such that $c = \varepsilon c' $ and $d = \varepsilon d'.$

This definition does not depend on the ideal $\mathfrak{a}$ that was chosen. Given another ideal $\mathfrak{a}'$ in the same ideal class as $\mathfrak{a},$ there exists an element $\alpha \in \field^*$ such that $\mathfrak{a}'=\alpha \mathfrak{a}.$ This implies that the numerator and denominator of the inner sum in (\ref{eq15}) would both be multiplied by $N(\alpha)^k,$ and hence the expression is invariant (\cite{CoSt} pg. 662). 

We now want to describe the Eisenstein series at a cusp $\kappa \neq \infty,$ which corresponds to a (fixed) non-trivial ideal class. The idea is to translate the cusp $\kappa$ to $\infty$ where an explicit expression of the Eisenstein series is easy to describe. 

By Equation (\ref{eq7}), the Fourier expansion of $E_{k,\mathfrak{n}}(\chi,z, \kappa)$ at $\kappa$ is equal to the Fourier expansion of $E_{k,\mathfrak{n}}(\chi,z) |_k\, A^{-1}=(-\alpha_3z+\alpha_1)^{-k}E_{k,\mathfrak{n}}(\chi,A^{-1}z)$ at $\infty.$ Since $\det A = 1,$ a direct computation shows that $(-\alpha_3z+\alpha_1)= (\alpha_3A^{-1}z+\alpha_4)^{-1}.$ Hence, we have
\begin{equation} \label{neededES}
E_{k,\mathfrak{n}}(\chi,z, \kappa) = (\alpha_3A^{-1}z+\alpha_4)^k\sum_{a_2 \in \left(\mathfrak{a} / \mathfrak{na} \right)^{\times}}  \chi(\langle a_2 \rangle)
\sum_{\substack{\{c,d\} \in \mathfrak{a}\times \mathfrak{a} \\ \gcd \left(\frac{c}{\mathfrak{a}},\frac{d}{\mathfrak{a}} \right)=1 \\d \equiv a_2 \bmod \mathfrak{n}\mathfrak{a}}} \frac{N(\mathfrak{a})^k}{(\mathfrak{n}cA^{-1}z+d)^{k}}.
\end{equation}
\section{Fourier Coefficients of Certain Eisenstein Series} \label{FCES}
We are interested in calculating the non-constant Fourier coefficients of (\ref{neededES}). Indeed, this will be the main calculation needed to construct our $p$-adic measure $\lambda.$ We start by defining a more general Eisenstein series by 
\begin{equation} \label{ES}
G_k(z,a_1,a_2,\mathfrak{n}, \mathfrak{a}) = \sum_{\substack{\{c,d\}^+ \in \mathfrak{a}\times \mathfrak{a} \\ (c,d) \equiv (a_1,a_2) \bmod \mathfrak{n}\mathfrak{a}}}\frac{N(\mathfrak{a})^k}{(cz+d)^{k}}.
\end{equation}
The notation $\{c,d\}^+$ means that two tuples $(c,d)$ and $(c',d')$ are identified together if there exists a totally positive $\varepsilon \equiv 1 \bmod \mathfrak{n}$ such that $c = \varepsilon c' $ and $d = \varepsilon d'.$
\begin{lem} \label{reqlem}
	We have
	\begin{equation*}
	\sum_{\substack{\{c,d\} \in \mathfrak{a}\times \mathfrak{a} \\ \gcd \left(\frac{c}{\mathfrak{a}},\frac{d}{\mathfrak{a}} \right)=1 \\d \equiv a_2 \bmod \mathfrak{n}\mathfrak{a}}} \frac{N(\mathfrak{a})^k}{(cz+d)^{k}} = 
	\sum_{a_1 \in \mathfrak{a}/\mathfrak{a}\mathfrak{n}} \sum_{i=1}^{h(\mathfrak{n})} \sum_{ \mathfrak{t} \in C_i}\frac{\mu(\mathfrak{t})} {N(\mathfrak{t})^k} G_k(z,\tau a_1,\tau a_2,\mathfrak{n}, \mathfrak{a}\mathfrak{t})
	\end{equation*}
	where $\mu(\mathfrak{m})$ denotes the Mobius function for any integral ideal $\mathfrak{m}$ given by 
	\begin{equation}
	\mu(\mathfrak{m})=
	\begin{cases}
	1 &\text{if } \mathfrak{m} = \mathcal{O} \\
	(-1)^r &\text{if } \mathfrak{m} = \mathfrak{p}_1 \cdots \mathfrak{p}_r \\
	0 &\text{if } \mathfrak{p}^2 | \mathfrak{m} \text{ for some prime ideal } \mathfrak{p} 
	\end{cases}
	\end{equation}
	and $h(\mathfrak{n})$ denotes the strict ideal class number of $\field \bmod \mathfrak{n}.$
\end{lem}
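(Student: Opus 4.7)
The plan is to prove the identity by Möbius inversion on integral ideals, removing the coprimality condition on the LHS, and then reorganizing the resulting sum by strict ideal classes modulo $\mathfrak{n}$.

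First, I observe that $\gcd(c/\mathfrak{a}, d/\mathfrak{a}) = 1$ is equivalent to $\langle c, d \rangle = \mathfrak{a}$. For a general pair $(c,d) \in \mathfrak{a} \times \mathfrak{a}$, letting $\mathfrak{t}$ be the unique integral ideal with $\langle c, d \rangle = \mathfrak{a}\mathfrak{t}$ yields $(c,d) \in \mathfrak{a}\mathfrak{t} \times \mathfrak{a}\mathfrak{t}$. The Möbius identity $\mathbf{1}_{\mathfrak{t} = \mathcal{O}} = \sum_{\mathfrak{s} \mid \mathfrak{t}} \mu(\mathfrak{s})$, applied to the indicator of coprimality and combined with an interchange of summation, gives
\[
\sum_{\substack{\{c,d\} \in \mathfrak{a}\times\mathfrak{a} \\ \gcd(c/\mathfrak{a}, d/\mathfrak{a}) = 1 \\ d \equiv a_2 \bmod \mathfrak{n}\mathfrak{a}}} \frac{N(\mathfrak{a})^k}{(cz+d)^k} = \sum_{\mathfrak{s}} \mu(\mathfrak{s}) \sum_{\substack{\{c,d\} \in \mathfrak{a}\mathfrak{s} \times \mathfrak{a}\mathfrak{s} \\ d \equiv a_2 \bmod \mathfrak{n}\mathfrak{a}}} \frac{N(\mathfrak{a})^k}{(cz+d)^k},
\]
with $\mathfrak{s}$ running over integral ideals. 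Writing $N(\mathfrak{a})^k = N(\mathfrak{a}\mathfrak{s})^k/N(\mathfrak{s})^k$ extracts the factor $N(\mathfrak{s})^{-k}$ appearing on the RHS.

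Next, I reshape each inner sum into a combination of values of $G_k(z,\cdot,\cdot,\mathfrak{n},\mathfrak{a}\mathfrak{s})$. The definition of $G_k$ imposes congruences modulo $\mathfrak{n}\mathfrak{a}\mathfrak{s}$, so the unconstrained variable $c \in \mathfrak{a}\mathfrak{s}$ must be split by residues modulo $\mathfrak{n}\mathfrak{a}\mathfrak{s}$, and the constraint $d \equiv a_2 \bmod \mathfrak{n}\mathfrak{a}$ with $d \in \mathfrak{a}\mathfrak{s}$ must be refined to a single residue modulo $\mathfrak{n}\mathfrak{a}\mathfrak{s}$. Both refinements are mediated by a suitably chosen $\tau = \tau(\mathfrak{s})$: for each strict class $C_i$ modulo $\mathfrak{n}$, fix a representative ideal, and for each $\mathfrak{s} \in C_i$ let $\tau$ be totally positive with $\tau \equiv 1 \bmod \mathfrak{n}$ such that multiplication by $\tau$ induces a bijection between $\mathfrak{a}/\mathfrak{n}\mathfrak{a}$ and $\mathfrak{a}\mathfrak{s}/\mathfrak{n}\mathfrak{a}\mathfrak{s}$. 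The element $\tau a_2$ then lifts $a_2$ to the required residue class, and the inner sum reorganizes as $\sum_{a_1 \in \mathfrak{a}/\mathfrak{n}\mathfrak{a}} G_k(z, \tau a_1, \tau a_2, \mathfrak{n}, \mathfrak{a}\mathfrak{s})$.

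Finally, partitioning the outer sum by the strict class $C_i$ of $\mathfrak{s}$ and exchanging with the sum over $a_1$ yields the stated identity. The main obstacle will be the precise construction of $\tau = \tau(\mathfrak{s})$ and verification of the residue-class bijection, especially for non-principal $\mathfrak{s}$, where $\tau$ must be adjusted through the fixed representative of $C_i$. One also needs to argue that $\mathfrak{s}$ not coprime to $\mathfrak{n}$ can be ignored — squarefreeness is already enforced by $\mu$, and coprimality to $\mathfrak{n}$ follows from the solvability of $d \equiv a_2 \bmod \mathfrak{n}\mathfrak{a}$ inside $\mathfrak{a}\mathfrak{s}$ — and that the equivalence $\{c,d\}$ on the LHS agrees with the finer equivalence $\{c,d\}^+$ used in $G_k$, which is consistent because $(cz+d)^{-k}$ is invariant under multiplication by totally positive units (those of norm $1$) and, for the even weights $k$ relevant to the paper, under all units $\equiv 1 \bmod \mathfrak{n}$.
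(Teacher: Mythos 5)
Your proposal is correct and follows essentially the same route as the paper: insert the M\"obius identity $\sum_{\mathfrak{t}\mid\gcd(c/\mathfrak{a},d/\mathfrak{a})}\mu(\mathfrak{t})$ to strip the coprimality condition, swap the order of summation, observe that only $\mathfrak{t}$ prime to $\mathfrak{n}$ survive, and then regroup by strict ideal classes $C_i$ using a totally positive generator $\tau\equiv 1\bmod\mathfrak{n}$ of $\mathfrak{t}\mathfrak{t}_i$ to fuse the two congruences into a single one modulo $\mathfrak{n}\mathfrak{a}\mathfrak{t}$, recovering $G_k(z,\tau a_1,\tau a_2,\mathfrak{n},\mathfrak{a}\mathfrak{t})$ after summing over $a_1\in\mathfrak{a}/\mathfrak{n}\mathfrak{a}$. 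The only substantive difference is that you explicitly flag the passage from the $\{c,d\}$-equivalence on the left to the finer $\{c,d\}^+$-equivalence inside $G_k$, a normalization point the paper's own proof passes over silently.
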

\begin{proof}
	Recall the following property of the Mobius function:
	\begin{equation}
	\sum_{\mathfrak{t}|\mathfrak{n}} \mu(\mathfrak{t}) = 
	\begin{cases}
	1 &\text{if }\mathfrak{n}=\mathcal{O} \\
	0 &\text{otherwise}
	\end{cases}.
	\end{equation} 
	We can use this property to remove the coprimeness condition. Recall that $\frac{c}{\mathfrak{a}}, \frac{d}{\mathfrak{a}}\in \ring,$ which implies that the ideal $\gcd \left(\frac{c}{\mathfrak{a}},\frac{d}{\mathfrak{a}} \right)$ is an integral ideal. We now have 
	\begin{align}
	\sum_{\substack{\{c,d\} \in \mathfrak{a}\times \mathfrak{a} \\ \gcd \left(\frac{c}{\mathfrak{a}},\frac{d}{\mathfrak{a}} \right)=1 \\ (c,d) \equiv (a_1,a_2) \bmod \mathfrak{n}\mathfrak{a}}}\frac{N(\mathfrak{a})^k}{(cz+d)^{k}} &= 
	\sum_{\substack{\{c,d\} \in \mathfrak{a}\times \mathfrak{a} \\ (c,d) \equiv (a_1,a_2) \bmod \mathfrak{n}\mathfrak{a}}}\frac{N(\mathfrak{a})^k}{(cz+d)^{k}} \sum_{ \mathfrak{t} | \gcd (\frac{c}{\mathfrak{a}},\frac{d}{\mathfrak{a}})} \mu(\mathfrak{t}) \\
	&= \sum_{\gcd(\mathfrak{t},\mathfrak{n})=1}\mu(\mathfrak{t}) \sum_{\substack{\{c,d\} \in \mathfrak{a}\times \mathfrak{a} \\ (c,d) \equiv (a_1,a_2) \bmod \mathfrak{n}\mathfrak{a} \\ (c,d) \equiv (0,0) \bmod \mathfrak{a}\mathfrak{t} }}\frac{N(\mathfrak{a})^k}{(cz+d)^{k}}.
	\end{align}
	Let $C_1, \cdots , C_{h(\mathfrak{n})}$ denote the strict ideal classes of $K$ modulo $\mathfrak{n}.$ Choose integral ideals $\mathfrak{t}_1, \cdots , \mathfrak{t}_{h(\mathfrak{n})}$ such that $\mathfrak{t}_i \in C_i^{-1}$ for all $i \in \{1,\cdots h(\mathfrak{n}) \}$. For each $\mathfrak{t} \in C_i,$ let $(\tau)=\mathfrak{t}\mathfrak{t}_i$ where $\tau \equiv 1 \bmod \mathfrak{n}.$ Then the previous sum is equal to
	\begin{align}
	&\sum_{i=1}^{h(\mathfrak{n})} \sum_{\substack{ \mathfrak{t} \in C_i\\ \gcd(\mathfrak{t},\mathfrak{n})=1}}\mu(\mathfrak{t}) \sum_{\substack{\{c,d\} \in \mathfrak{a}\times \mathfrak{a} \\ (c,d) \equiv (\tau a_1,\tau a_2) \bmod \mathfrak{n}\mathfrak{a} \mathfrak{t} }}\frac{N(\mathfrak{a})^k}{(cz+d)^{k}} \\ 
	=&\sum_{i=1}^{h(\mathfrak{n})} \sum_{ \mathfrak{t} \in C_i}\frac{\mu(\mathfrak{t})} {N(\mathfrak{t})^k} G_k(z,\tau a_1,\tau a_2,\mathfrak{n}, \mathfrak{a}\mathfrak{t}).
	\end{align}
	Summing up over the equivalence classes of $a_1$ modulo $\mathfrak{na},$ we get the desired result.
\end{proof}
The Fourier expansion of the Eisenstein series in Equation (\ref{ES})  has been computed by Klingen (\cite{Klingen}, pg. 181, 182) as follows:  
\begin{thm} \label{fcthm}
	The Fourier expansion of $G_k(z,a_1,a_2,\mathfrak{n}, \mathfrak{a})$ at a general cusp $\kappa$ is given by 
	\begin{multline}
	\delta \left(\frac{(a_1^*)}{\mathfrak{n}\mathfrak{ab}}\right)  N\left(\mathfrak{a}\right)^k \sum_{\substack{d\equiv a_2^* \bmod \mathfrak{n}\mathfrak{ab}^{-1}\\ \{d\}^+ }} \frac{\text{sgn}^k N(d)}{|N(d)|^{k}} \\ 
	+ \frac{(-2\pi i)^{kr}N(\mathfrak{a})^{k-1}N(\mathfrak{b})}{(k-1)!^r  N(\mathfrak{n})|D|^{1/2}} \sum_{\substack {c\equiv a_1^* \bmod \mathfrak{n}\mathfrak{ab} \\ \nu \equiv 0 \bmod \mathfrak{b} /  \mathfrak{n}\mathfrak{a}\mathfrak{d} \\ c\nu \gg 0 ,  \{c\}^+}} \text{sgn} N(\nu) N(\nu)^{k-1}\exp{(a_2^*\nu+c\nu z)}
	\end{multline}
	where $a_1^*=\alpha_4 a_1-\alpha_1 a_2 \in \mathfrak{ab}$ and $a_2^*=\alpha_1 a_2 - \alpha_2 a_1 \in \mathfrak{ab}^{-1}$.
\end{thm}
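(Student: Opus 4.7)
The plan is to derive the Fourier expansion at the cusp $\kappa = -\alpha_4/\alpha_3$ by transporting the sum back to the cusp $\infty$ via the slash operator with the matrix $A \in SL_2(\field)$ satisfying $A\kappa = \infty$, and then applying a multivariate Lipschitz summation formula coordinate-wise over the $\degr$ real embeddings of $\field$. This follows the same strategy as Klingen's original derivation.

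The first step is to implement the change of variables. Because $\det A = 1$, there is a linear identity expressing $(cA^{-1}z+d)(-\alpha_3z+\alpha_1)^{-1}$ as $c'z+d'$ with $c',d'$ linear combinations of $c,d,\alpha_i$. Since $\idealb = \langle \alpha_3, \alpha_4 \rangle$ and $\idealb^{-1} = \langle \alpha_1, \alpha_2 \rangle$, this map is an $\ring$-module isomorphism from $\ideala \times \ideala$ onto $\ideala\idealb^{-1} \times \ideala\idealb$. The congruence class $(a_1, a_2) \bmod \idealn\ideala$ transports to a class indexed by $a_1^* = \alpha_4 a_1 - \alpha_1 a_2 \in \ideala\idealb$ and $a_2^* = \alpha_1 a_2 - \alpha_2 a_1 \in \ideala\idealb^{-1}$, modulo the ideals $\idealn\ideala\idealb$ and $\idealn\ideala\idealb^{-1}$ respectively. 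This accounts for the prefactor $(\alpha_3 A^{-1}z+\alpha_4)^k$ implicit in the Fourier expansion formalism at $\kappa$, cf.\ Equation~\eqref{eq7}.

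The next step is to split the doubly-indexed sum according to whether the new $c$-coordinate vanishes. The $c=0$ branch is allowed precisely when $a_1^* \in \idealn\ideala\idealb$, producing the Kronecker delta $\delta\bigl((a_1^*)/\idealn\ideala\idealb\bigr)$; the remaining sum over $d \equiv a_2^* \bmod \idealn\ideala\idealb^{-1}$, collected up to totally positive equivalence, gives the partial zeta contribution in the first line. For the $c \neq 0$ branch, fix $c$ in its class and invoke the Lipschitz-Hecke formula
\begin{equation*}
\sum_{d \in \mathfrak{m} + a_2^*} \frac{1}{(cz + d)^k} = \frac{(-2\pi i)^{k\degr}}{(k-1)!^\degr\, N(\mathfrak{m})\, |D|^{1/2}} \sum_{\substack{\nu \in (\mathfrak{m}\diff)^{-1} \\ c\nu \gg 0}} \text{sgn}\, N(\nu)\, N(\nu)^{k-1} \exp(a_2^* \nu)\exp(c\nu z),
\end{equation*}
with $\mathfrak{m} = \idealn\ideala\idealb^{-1}$. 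This is a consequence of Poisson summation on the lattice $\mathfrak{m} \subset \R^\degr$ under the trace pairing, whose dual is $(\mathfrak{m}\diff)^{-1}$ and contributes the factor $|D|^{1/2} N(\mathfrak{m})$, combined with the one-dimensional Fourier transform of $x \mapsto (z+x)^{-k}$, which supplies $(-2\pi i)^k/(k-1)!$ and the positivity condition $c\nu \gg 0$ selecting the relevant hyperoctant. Substituting $\mathfrak{m} = \idealn\ideala\idealb^{-1}$ and summing over $c \equiv a_1^* \bmod \idealn\ideala\idealb$ produces the prefactor $(-2\pi i)^{k\degr} N(\ideala)^{k-1} N(\idealb) / ((k-1)!^\degr N(\idealn) |D|^{1/2})$.

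The main obstacle will be purely bookkeeping: keeping straight the congruence classes for $a_1^*, a_2^*$ under the change of variables, applying Poisson summation with the correct trace-dual lattice to arrive at exactly $|D|^{1/2}$, and ensuring that the totally positive identification $\{c,d\}^+$ modulo $\varepsilon \equiv 1 \bmod \idealn$ is preserved throughout so that both the resulting indexing set and the signs $\text{sgn}\, N(\nu)$ match the statement. No individual step is deep, but the combinatorics of the indices and factors demands careful tracking, which is why we defer to the detailed calculation of Klingen.
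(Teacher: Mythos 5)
Your outline is correct and follows exactly the route of the source the paper relies on: the paper gives no proof of Theorem \ref{fcthm}, citing Klingen's computation directly, and your change of variables $(c,d)\mapsto(c',d')$ at the cusp, the $c'=0$ versus $c'\neq 0$ split producing the Kronecker delta, and the Lipschitz--Hecke/Poisson summation over the lattice $\mathfrak{m}=\mathfrak{n}\mathfrak{a}\mathfrak{b}^{-1}$ with trace-dual $(\mathfrak{m}\mathfrak{d})^{-1}$ reproduce Klingen's derivation with the constants matching the stated formula. No further comparison is possible since the paper contains no independent argument.
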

We use this Fourier expansion to prove the following:
\begin{thm} \label{fourier}
	The Fourier expansion of the partial Eisenstein series $$(\alpha_3A^{-1}z+\alpha_4)^k 
	\sum_{\substack{\{c,d\} \in \mathfrak{a}\times \mathfrak{a} \\ \gcd \left(\frac{c}{\mathfrak{a}},\frac{d}{\mathfrak{a}} \right)=1 \\d \equiv a_2 \bmod \mathfrak{n}\mathfrak{a}}} \frac{N(\mathfrak{a})^k}{(\mathfrak{n}cA^{-1}z+d)^{k}}$$ at the cusp $\infty$ is given by
	\begin{multline*}
	\sum_{l \equiv a_2^* \bmod \mathfrak{n}\mathfrak{ab}^{-1} } \frac{N(\mathfrak{a})^k}{N(l)^k}\sum_{\substack{\mathfrak{t} \in K \\ \mathfrak{t} \mid \langle l \rangle \\ \mathfrak{n} \nmid \mathfrak{t}}} \mu(\mathfrak{t}) \\ 
	+\frac{(-2\pi i)^{kr}N(\mathfrak{b})^k}{(k-1)!^r N(\mathfrak{n})^k|D|^{\frac{2k-1}{2}}} \sum_{i=1}^{h(\mathfrak{n})} \sum_{ \mathfrak{t} \in C_i}\frac{\mu(\mathfrak{t})} {N(\mathfrak{t})^k} \sum_{\substack {\nu \in \ring \\ c\nu \gg 0 ,  \{c\}^+}} \text{sgn} N(\nu) N(\nu)^{k-1}\exp\left(\frac{\mathfrak{t}_i a_2'\nu \mathfrak{b}}{\mathfrak{n} \mathfrak{d}}+\frac{c \nu  \mathfrak{b}^2z}{\mathfrak{d}}\right).
	\end{multline*}
\end{thm}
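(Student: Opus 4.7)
The plan is to combine Möbius inversion (Lemma~\ref{reqlem}) with Klingen's Fourier expansion (Theorem~\ref{fcthm}). The target sum is invariant under the slash action by $A^{-1}$, and by equation~(\ref{eq7}) its Fourier expansion at $\infty$ in $z$ equals, as a formal object, the Fourier expansion of the unweighted inner sum at the cusp $\kappa$. Thus the first step is to apply Lemma~\ref{reqlem} (adapted to accommodate the $\mathfrak{n}c$ factor appearing in the denominator) to rewrite the inner sum as
\[
\sum_{a_1 \in \mathfrak{a}/\mathfrak{n}\mathfrak{a}}\sum_{i=1}^{h(\mathfrak{n})}\sum_{\mathfrak{t} \in C_i}\frac{\mu(\mathfrak{t})}{N(\mathfrak{t})^k}\, G_k(z,\tau a_1, \tau a_2, \mathfrak{n}, \mathfrak{a}\mathfrak{t}),
\]
and then to apply Theorem~\ref{fcthm} to each $G_k$ at the cusp $\kappa$. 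Each resulting expansion splits into a constant and a non-constant contribution, and I handle the two independently.

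For the constant part, the delta factor $\delta\bigl((a_1^*)/(\mathfrak{n}\mathfrak{a}\mathfrak{t}\mathfrak{b})\bigr)$ coming from Klingen's formula collapses the $a_1$-sum: for each fixed $\mathfrak{t}$ only those $a_1 \in \mathfrak{a}/\mathfrak{n}\mathfrak{a}$ with $a_1^* \in \mathfrak{n}\mathfrak{a}\mathfrak{t}\mathfrak{b}$ contribute. Writing $l = d$ and interchanging the order of summation so that the Möbius sum $\sum_{\mathfrak{t} \mid (l),\ \mathfrak{n}\nmid\mathfrak{t}}\mu(\mathfrak{t})$ sits inside the $l$-sum then reassembles the first displayed term; the constraints $\mathfrak{t} \in C_i$ together with $\mathfrak{t}\mid (l)$ exhaust every $\mathfrak{t}$ produced by the original Möbius decomposition.

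For the non-constant part, the $a_1$-sum removes the congruence $c \equiv a_1^* \bmod \mathfrak{n}\mathfrak{a}\mathfrak{t}\mathfrak{b}$, leaving $c$ free modulo only the $\{c\}^+$ identification. Using the explicit form of $A^{-1}$ together with $\alpha_3,\alpha_4 \in \mathfrak{b}$ and $\alpha_1,\alpha_2 \in \mathfrak{b}^{-1}$, the exponential $\exp(a_2^*\nu + c\nu z)$ provided by Klingen's formula is re-expressed as $\exp\bigl(\mathfrak{t}_i a_2'\nu \mathfrak{b}/(\mathfrak{n}\mathfrak{d}) + c\nu\mathfrak{b}^2 z/\mathfrak{d}\bigr)$; simultaneously Klingen's prefactor $N(\mathfrak{a}\mathfrak{t})^{k-1} N(\mathfrak{b})/(N(\mathfrak{n})|D|^{1/2})$ combines with the Möbius weight $\mu(\mathfrak{t})/N(\mathfrak{t})^k$ and with the additional $\mathfrak{b}$- and $\mathfrak{d}$-powers arising from the coordinate normalization to yield the stated coefficient $(-2\pi i)^{kr} N(\mathfrak{b})^k/((k-1)!^r N(\mathfrak{n})^k |D|^{(2k-1)/2})$.

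The main obstacle is the ideal-theoretic bookkeeping. After Lemma~\ref{reqlem}, the base ideal has become $\mathfrak{a}\mathfrak{t}$, so every ideal condition in Klingen's formula (the conductor $\mathfrak{n}\mathfrak{a}\mathfrak{t}\mathfrak{b}$, the dual lattice $\mathfrak{b}/(\mathfrak{n}\mathfrak{a}\mathfrak{t}\mathfrak{d})$, and the normalization $N(\mathfrak{a}\mathfrak{t})$) must be reassembled in terms of the fixed $\mathfrak{a},\mathfrak{b},\mathfrak{t}_i,\mathfrak{d},\mathfrak{n}$. The auxiliary scalars $\tau$ (chosen so that $\tau \equiv 1 \bmod \mathfrak{n}$ and $(\tau) = \mathfrak{t}\mathfrak{t}_i$) must combine precisely with the Fourier arguments so that the exponential depends on $\mathfrak{t}$ only through $\mu(\mathfrak{t})/N(\mathfrak{t})^k$, leaving $\mathfrak{t}_i$ alone in the exponent. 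Verifying this cancellation, and checking that the accumulated powers of $N(\mathfrak{a}),\, N(\mathfrak{b}),\, N(\mathfrak{n})$, and $|D|$ assemble to the stated clean coefficient, is the technical crux of the computation.
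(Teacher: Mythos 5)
Your proposal is correct and follows essentially the same route as the paper: apply Lemma~\ref{reqlem} to reduce to the series $G_k$, invoke Klingen's expansion (Theorem~\ref{fcthm}) together with Equation~(\ref{eq7}), let the delta factor collapse the $a_1$-sum and interchange summation to produce the M\"obius sum in the constant term, and sum over $a_1^*$ to free $c$ before the change of variables $\nu \to \nu\mathfrak{b}/(\mathfrak{n}\mathfrak{a}\mathfrak{d}\mathfrak{t})$, $c \to \mathfrak{a}\mathfrak{b}\mathfrak{t}c$ in the non-constant term. The bookkeeping you flag as the technical crux is carried out in the paper at essentially the same level of detail you indicate.
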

\begin{proof}
	Lemma \ref{reqlem} implies that we need to calculate the Fourier expansion of 
	\begin{equation} \label{equation1}
	(\alpha_3A^{-1}z+\alpha_4)^k 
	\sum_{a_1 \in \mathfrak{a}/\mathfrak{a}\mathfrak{n}} \sum_{i=1}^{h(\mathfrak{\mathfrak{n}})} \sum_{ \mathfrak{t} \in C_i}\frac{\mu(\mathfrak{t})} {N(\mathfrak{t})^k} G_k(\mathfrak{n}A^{-1}z,\tau a_1,\tau a_2,\mathfrak{n}, \mathfrak{a}\mathfrak{t}).
	\end{equation}
	Theorem \ref{fcthm} along with Equation (\ref{eq7}) now show that the constant term of (\ref{equation1}) is non-zero only when $ a_1^* \equiv 0 \bmod \mathfrak{n}\mathfrak{atb}.$ This implies that the contribution from the terms of the outermost sum are all zero except for one equivalence class of $a_1.$ The nonzero term remaining is 
	\begin{equation}
	\sum_{i=1}^{h(\mathfrak{\mathfrak{n}})} \sum_{ \mathfrak{t} \in C_i}\frac{\mu(\mathfrak{t})} {N(\mathfrak{t})^k} N\left(\mathfrak{at}\right)^k \sum_{\substack{d\equiv \tau a_2^* \bmod \mathfrak{n}\mathfrak{atb}^{-1}\\ \{d\}^+ }} \frac{\text{sgn}^k N(d)}{|N(d)|^{k}}.
	\end{equation}
	The ideals $\mathfrak{n}$ and $\mathfrak{t}$ being coprime, we can rewrite $d\equiv \tau a_2^* \bmod \mathfrak{n}\mathfrak{atb}^{-1}$ as two equivalences modulo $\mathfrak{n}\mathfrak{ab}^{-1}$ and $\mathfrak{atb}^{-1}$. We now obtain the constant term to be
	\begin{equation}
		\sum_{i=1}^{h(\mathfrak{\mathfrak{n}})} \sum_{ \mathfrak{t} \in C_i}\frac{\mu(\mathfrak{t})} {N(\mathfrak{t})^k} \sum_{\substack{d\equiv a_2^* \bmod \mathfrak{n}\mathfrak{ab}^{-1}\\ d \equiv 0 \bmod \mathfrak{ab}^{-1}\mathfrak{t} \\ \{d\}^+ }} N\left(\mathfrak{at}\right)^k \frac{\text{sgn}^k N(d)}{|N(d)|^{k}}. 	\end{equation}
	Switching the order of summation and writing $\langle l \rangle$ for the ideal generated by $l,$ the constant term becomes  
	\begin{align}
	&\sum_{\substack{\mathfrak{t} \in K \\ \mathfrak{p} \nmid \mathfrak{t}}}\frac{\mu(\mathfrak{t})} {N(\mathfrak{t})^k} \sum_{\substack{\mathfrak{t} d\equiv a_2^* \bmod \mathfrak{n}\mathfrak{ab}^{-1}\\ \{d\}^+ }} N\left(\mathfrak{a}\right)^k \frac{1}{N(d)^{k}} \\
	&=\sum_{l \equiv a_2^* \bmod \mathfrak{n}\mathfrak{ab}^{-1} } \frac{N(\mathfrak{a})^k}{N(l)^k}\sum_{\substack{\mathfrak{t} \in K \\ \mathfrak{t} \mid \langle l\rangle\\ \mathfrak{n} \nmid \mathfrak{t}}} \mu(\mathfrak{t}).
	\end{align}
	We consider next the non-constant terms. Note that as $a_1$ varies over the equivalence classes of $\mathfrak{a}/\mathfrak{an},$ $a_1^*$ varies over the equivalence classes of $\mathfrak{ab}/\mathfrak{abn}.$ It is clear then that the non-constant terms are
	\begin{multline}
	\frac{(-2\pi i)^{kr}N(\mathfrak{b})}{(k-1)!^r  N(\mathfrak{n})|D|^{1/2}}\sum_{a_1^* \in \mathfrak{ab}/\mathfrak{ab}\mathfrak{n}} \sum_{i=1}^{h(\mathfrak{n})} \sum_{ \mathfrak{t} \in C_i}\frac{\mu(\mathfrak{t})} {N(\mathfrak{t})^k}N(\mathfrak{at})^{k-1} \\
	\sum_{\substack {c\equiv \tau a_1^* \bmod \mathfrak{n}\mathfrak{ab}\mathfrak{t} \\ \nu \equiv 0 \bmod \mathfrak{b} /  \mathfrak{n}\mathfrak{a}\mathfrak{t}\mathfrak{d} \\ c\nu \gg 0 ,  \{c\}^+}} \text{sgn} N(v) N(\nu)^{k-1}\exp(\tau a_2^*\nu+ c \nu \mathfrak{n} z).
	\end{multline}
	As before, we can split the equivalence modulo $\mathfrak{n}\mathfrak{ab}\mathfrak{t}$ into two equivalences modulo $\mathfrak{n}\mathfrak{ab}$ and $\mathfrak{ab}\mathfrak{t}.$ Moreover, we make a change of variables $\nu \to \frac{\nu \mathfrak{b}}{\mathfrak{n}\mathfrak{adt}}$ and $c \to \mathfrak{abt}c.$ Since $N(\mathfrak{d})=|D|,$ we get
	\begin{equation} 
	\frac{(-2\pi i)^{kr}N(\mathfrak{b})^k}{(k-1)!^r N(\mathfrak{n})^k|D|^{\frac{2k-1}{2}}} \sum_{i=1}^{h(\mathfrak{n})} \sum_{ \mathfrak{t} \in C_i}\frac{\mu(\mathfrak{t})} {N(\mathfrak{t})^k} \sum_{\substack {\nu \in \ring \\ c\nu \gg 0 ,  \{c\}^+}} \text{sgn} N(\nu) N(\nu)^{k-1}\exp\left(\frac{\mathfrak{t}_i a_2'\nu \mathfrak{b}}{\mathfrak{n} \mathfrak{d}}+\frac{c \nu  \mathfrak{b}^2z}{\mathfrak{d}}\right).
	\end{equation}
\end{proof}
\section{Constructing a p-adic Measure} \label{construction}
\subsection{p-adic Measures on Galois Groups} 
Let $\mathfrak{b}_1, \cdots, \mathfrak{b}_h$ represent the strict ideal classes of $K,$ where each $\mathfrak{b}_j$ is the ideal generated by the lower row of the matrix $A_j$ associated with the distinct cusps $\kappa_j.$ By construction, all the ideals $\mathfrak{b}_j$ are relatively prime to $\mathfrak{p}.$

We can decompose $G=G\left(K(p^\infty)/K\right)$ as $\sqcup_j\mathfrak{b}_j^{-1}(\ring_{\mathfrak{p}}^*/U),$ where $U$ is the closure in $\ring_{\mathfrak{p}}^*$ of the subgroup of all totally positive units in $\ring^*$ (\cite{Hida}, Section 3.9). Then for any continuous function $\phi$ on $G$, we can define a continuous function $\phi_j$ on $\ring_{\mathfrak{p}}$ by
\begin{equation}
\phi_j(x)= \begin{cases}
\phi(\mathfrak{b}_j^{-1}x) &\text { if }x \in \ring_{\mathfrak{p}}^*  \\
0 &\text{ otherwise.}
\end{cases}
\end{equation}
We then define p-adic measures on $G$ by setting
\begin{equation}
\int_G \phi \, d\mu = \sum_{j} \int_{\ring_{\mathfrak{p}}} \phi_j d\mu_j.
\end{equation}
\subsection{The Main Construction}
The structure of the Galois group $G$ allows us to define a distribution $\lambda$ on $G$ by defining it on each piece $\mathfrak{b}_j^{-1}(\ring_{\mathfrak{p}}^*/U).$ We will denote the restriction of $\lambda$ to each $\mathfrak{b}_j^{-1}(\ring_{\mathfrak{p}}^*/U)$ by $\lambda_j.$

Fix a strict ideal class $\mathfrak{b}_j.$  
Define a map 
\begin{equation}
\varepsilon_{k,\primep^m, \idealb_j}(\ideala): \ring/\primep^m\ring \to \mathcal(M_k(\Gamma_1(\primep^m)))
\end{equation}
by
\begin{equation}
\varepsilon_{k,\primep^m, \idealb_j}(\ideala) \coloneqq  (\alpha_3A^{-1}z+\alpha_4)^k \sum_{\substack{\{c,d\} \in \mathfrak{a}\times \mathfrak{a} \\ \gcd \left(\frac{c}{\mathfrak{a}},\frac{d}{\mathfrak{a}} \right)=1 \\ d \equiv a_2 \bmod \mathfrak{p}^m\mathfrak{a}}} \frac{N(\mathfrak{a})^k}{(\mathfrak{p}^mcA^{-1}z+d)^{k}}. 
\end{equation}
Let $\mathcal{C}_n$ denote the map which sends a holomorphic Hilbert modular form to the coefficient of $e^{2\pi i Tr(nz)}$ in its Fourier expansion. Theorem \ref{fourier} states that
\begin{multline}
\mathcal{C}_{\primep^m}(\varepsilon_{k,\primep^m, \idealb_j})=\frac{(-2\pi i)^{kr}N(\mathfrak{b}_j)^k}{(k-1)!^r|D|^{\frac{2k-1}{2}}} \sum_{u=0}^m N(\mathfrak{p})^{u(k-1)-mk} \\ \sum_{i=1}^{h(\mathfrak{p}^m)} \sum_{\mathfrak{t} \in C_i}\frac{\mu(\mathfrak{t})}{N(\mathfrak{t})^{k}}\exp\left(\frac{\mathfrak{t}_i a_2'\mathfrak{p}^u \mathfrak{b}_j}{\mathfrak{p}^m \mathfrak{d}}\right).
\end{multline}
Define a distribution on $\mathfrak{b}_j^{-1}(\ring_{\mathfrak{p}}^*/U)$ by
\begin{equation} \label{eq13}
\lambda_j(\mathfrak{a}+\mathfrak{p}^m\ring_{\mathfrak{p}})=  \frac{1}{2^rN(\mathfrak{b}_j)^k} \mathcal{C}_{\primep^m}(\varepsilon_{k,\primep^m, \idealb_j})+ \gamma(k)\lambda_{Haar}
\end{equation}
where
\begin{equation}
\gamma(k)=
\begin{cases}
\frac{N(\mathfrak{p})^{2k-1}}{N(\mathfrak{p})^k-1}\frac{(1-N(\mathfrak{p})^{k-1})^{-1}}{\zeta(1-k)} & \text{ for k even}\\
0 & \text{ otherwise}
\end{cases} . 
\end{equation}
Note that since $a_2' \in \mathfrak{a}\mathfrak{b}_j^{-1},$ the term inside the exponential function, and hence the definition of $\lambda,$ does not depend on the ideals $\mathfrak{b}_j.$ Henceforth we will write $\mathfrak{a}$ instead of $a_2'\mathfrak{b}_j.$  
\begin{thm}
	Let $\chi$ be a Hecke character of finite type with conductor $\mathfrak{p}^m$ for some $m \geq 1.$ Let $k$ be an integer of the same parity as $\chi.$ Then we have
	\begin{equation} \label{eq5}
	\int_G \chi^{-1} (x) \, d\lambda (x) = \frac{h^+\left(1-\chi(\mathfrak{p})N(\mathfrak{p})^{k-1}\right)^{-1}}{L(1-k, \chi)}
	\end{equation}
	where $h^+$ denotes the strict class number of $K$.
\end{thm}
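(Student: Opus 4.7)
The plan is to evaluate $\int_G \chi^{-1}(x)\,d\lambda(x)$ by unfolding through the decomposition $G=\sqcup_j \mathfrak{b}_j^{-1}(\ring_\mathfrak{p}^*/U)$ and the defining formula (\ref{eq13}) for $\lambda_j$, and then to match what emerges with the classical functional equation of the Hecke $L$-function $L(s,\chi)$. Since $\chi^{-1}$ factors through $(\ring/\mathfrak{p}^m)^*$, it is locally constant on each piece, so the integral collapses to the finite Riemann sum $\sum_j\sum_{\mathfrak{a}\bmod \mathfrak{p}^m}\chi^{-1}(\mathfrak{b}_j^{-1}\mathfrak{a})\,\lambda_j(\mathfrak{b}_j^{-1}(\mathfrak{a}+\mathfrak{p}^m\ring_\mathfrak{p}))$. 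The Haar correction $\gamma(k)\lambda_{Haar}$ integrates to zero against $\chi^{-1}$ because $\chi$ is a non-trivial character of $G$ (its conductor satisfies $m\geq 1$), so that contribution vanishes entirely and only the piece built from $\mathcal{C}_{\mathfrak{p}^m}(\varepsilon_{k,\mathfrak{p}^m,\mathfrak{b}_j})$ survives.

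Next, I substitute the Fourier-coefficient formula of Theorem \ref{fourier} and swap the order of summation so that the innermost sum is
$$\sum_{\mathfrak{a}\bmod \mathfrak{p}^m}\chi^{-1}(\mathfrak{a})\,\exp\!\left(\frac{\mathfrak{t}_i\mathfrak{a}\mathfrak{p}^u}{\mathfrak{p}^m\mathfrak{d}}\right).$$
Because $\chi^{-1}$ has exact conductor $\mathfrak{p}^m$, this Gauss-type sum vanishes whenever the additive character $\mathfrak{a}\mapsto\exp(\mathfrak{t}_i\mathfrak{a}\mathfrak{p}^u/(\mathfrak{p}^m\mathfrak{d}))$ factors through a proper quotient; this forces $u=0$ and $(\mathfrak{t}_i,\mathfrak{p})=1$, and the surviving term equals $\chi(\mathfrak{t}_i)\tau(\chi^{-1})$ for the Hecke Gauss sum $\tau(\chi^{-1})$. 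Meanwhile the factor $N(\mathfrak{b}_j)^k$ in the Fourier coefficient cancels against the $1/N(\mathfrak{b}_j)^k$ in (\ref{eq13}), and since the author has already noted that the exponential is independent of $\mathfrak{b}_j$, the $j$-sum over the strict class group contributes a clean factor of $h^+$.

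What remains is a Mobius-weighted series $\sum_{(\mathfrak{t},\mathfrak{p})=1}\mu(\mathfrak{t})\chi(\mathfrak{t})/N(\mathfrak{t})^k$ (after reconciling $\chi(\mathfrak{t})$ with $\chi(\mathfrak{t}_i)$ via the relation $\mathfrak{t}\mathfrak{t}_i=(\tau)$ with $\tau\equiv 1\bmod \mathfrak{p}^m$) multiplied by the constants $\tau(\chi^{-1})(-2\pi i)^{kr}/((k-1)!^r|D|^{(2k-1)/2}N(\mathfrak{p})^{mk})$ and the $1/2^r$ from the totally positive unit normalization. I recognize this Mobius-weighted series as the inverse Euler product for $L(k,\chi)^{-1}$; the Euler factor at $\mathfrak{p}$ is trivially $1$ since $\chi(\mathfrak{p})=0$, which is also why the factor $(1-\chi(\mathfrak{p})N(\mathfrak{p})^{k-1})^{-1}$ in the theorem statement is formally unity in this setting. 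The final step is to apply the functional equation of $L(s,\chi)$: the parity condition $\chi(-1)=(-1)^k$ fixes the sign factors, and the constants $(-2\pi i)^{kr}/(k-1)!^r$, $|D|^{(2k-1)/2}$, and $\tau(\chi^{-1})$ combine into precisely the archimedean gamma factor and root number that convert $L(k,\chi)^{-1}$ into $L(1-k,\chi)^{-1}$, yielding the desired $h^+/L(1-k,\chi)$.

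The main obstacle is the bookkeeping through the functional equation: every constant --- the powers of $-2\pi i$, the $(k-1)!^r$, the discriminant $|D|^{(2k-1)/2}$, the Gauss sum $\tau(\chi^{-1})$, the sign factors, and the $1/2^r$ normalization --- must be tracked meticulously and shown to reassemble into the standard gamma factor and root number of $L(s,\chi)$. A subtler ancillary point is carefully reconciling the $\chi(\mathfrak{b}_j)$ factors arising from the Riemann sum with the $\chi(\mathfrak{t}_i)$ factors from the Gauss-sum evaluation, so that the combined $j$- and $i$-sums yield the full inverse Euler product for $L(k,\chi)$ rather than a proper partial sum.
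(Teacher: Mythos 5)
Your treatment of the ramified case follows the paper's own route almost exactly: the Haar correction dies against a non-trivial $\chi$, the Gauss sum kills every term with $u>0$ and extracts $\chi(\mathfrak{t}_i)\tau(\bar\chi)$, the M\"obius-weighted series over $\mathfrak{t}$ prime to $\mathfrak{p}$ is recognized as the reciprocal of an $L$-value at $s=k$, and the functional equation converts this to $L(1-k,\chi)^{-1}$. One bookkeeping slip: since $\mathfrak{t}\mathfrak{t}_i=(\tau)$ with $\tau\equiv 1\bmod\mathfrak{p}^m$ gives $\chi(\mathfrak{t}_i)=\chi(\mathfrak{t})^{-1}$, the series you obtain is $\sum\mu(\mathfrak{t})\chi(\mathfrak{t})^{-1}N(\mathfrak{t})^{-k}=L(k,\bar\chi)^{-1}$, not $L(k,\chi)^{-1}$; it is $L(k,\bar\chi)$ together with $\tau(\bar\chi)$ and the relation $\tau(\chi)\tau(\bar\chi)=\chi(-1)N(\mathfrak{p}^m)$ that the functional equation pairs with $L(1-k,\chi)$. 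This is repairable and within the scope of the ``meticulous bookkeeping'' you flag.

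The genuine gap is that you never treat the trivial character. The Euler factor $\left(1-\chi(\mathfrak{p})N(\mathfrak{p})^{k-1}\right)^{-1}$ on the right-hand side is identically $1$ for every $\chi$ ramified at $\mathfrak{p}$ (as you yourself observe), so the only case in which the stated formula carries that factor nontrivially is $\chi=1$, and that is the case needed for part (1) of the main theorem and for the boundedness argument in Section \ref{proof}. The paper's proof opens with exactly this case, and it is structurally different from the ramified one: the sum $\sum_{\mathfrak{a}}\exp(\mathfrak{t}_i\mathfrak{a}\mathfrak{p}^u/(\mathfrak{p}\mathfrak{d}))$ no longer vanishes for $u>0$ (it equals $-1$ for $u=0$ and $N(\mathfrak{p})-1$ for $u=1$), the Dedekind zeta functional equation replaces the Hecke one, and --- crucially --- the Haar correction $\gamma(k)\lambda_{Haar}$ does \emph{not} integrate to zero. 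Verifying that the specific constant $\gamma(k)=\frac{N(\mathfrak{p})^{2k-1}}{N(\mathfrak{p})^k-1}\frac{(1-N(\mathfrak{p})^{k-1})^{-1}}{\zeta(1-k)}$ combines with the Eisenstein contribution to produce the stated Euler factor is the entire reason that term appears in the definition (\ref{eq13}) of $\lambda_j$; a proof that only ever integrates against characters orthogonal to Haar measure cannot detect whether $\gamma(k)$ was chosen correctly, and so does not establish the theorem in the case where its conclusion is not formally vacuous.
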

\begin{proof}
	First, we consider the case when $\chi$ is the trivial character and k is even. By finite additivity, we have
	\begin{align}
	\lambda_j(\ring_\mathfrak{p}^*)&=\sum_{\mathfrak{a} \in \ring/\mathfrak{p}\ring} \lambda_j(\mathfrak{a}+\mathfrak{p}\ring_{\mathfrak{p}}) \\
	&=\sum_{\mathfrak{a} \in \ring/\mathfrak{p}\ring}\frac{1}{2^r} \mathcal{C}_{\primep}(\varepsilon_{k,\primep, \idealb_j})+ \sum_{\mathfrak{a} \in \ring/\mathfrak{p}\ring}\gamma(k)\lambda_{Haar}.
	\end{align}
	We calculate each sum separately and start with the first sum. Since the sum of roots of unity modulo $\primep$ is 0, we have 
	\begin{equation}
	 \sum_{\mathfrak{a} \in \ring/\mathfrak{p}\ring}\exp\left(\frac{\mathfrak{t}_i \mathfrak{a}}{\mathfrak{p} \mathfrak{d}}\right) = -1.
	\end{equation}
	Moreover, $\exp\left(\frac{\mathfrak{t}_i \mathfrak{a}}{\mathfrak{d}}\right)= e^{2\pi i Tr(\mathfrak{t}_i \mathfrak{a}/\mathfrak{d})}=1$ for our choice of $\mathfrak{t}_i$ and  $\mathfrak{a},$ so
	\begin{equation}
	\sum_{\mathfrak{a} \in \ring/\mathfrak{p}\ring}\exp\left(\frac{\mathfrak{t}_i \mathfrak{a}}{\mathfrak{d}}\right)= N(\mathfrak{p})-1.
	\end{equation}
	These comments directly imply that the first sum is equal to	
	\begin{align}
	&\frac{(-2\pi i)^{kr}}{2^r(k-1)!^r|D|^{\frac{2k-1}{2}}} \sum_{\substack{\mathfrak{t}\in K \\ \mathfrak{p} \nmid \mathfrak{t}}}\frac{\mu(\mathfrak{t})}{N(\mathfrak{t})^{k}}  \sum_{u=0}^1 N(\mathfrak{p})^{u(k-1)-k} \sum_{\mathfrak{a} \in \ring/\mathfrak{p}\ring}\exp\left(\frac{\mathfrak{t}_i \mathfrak{p}^u \mathfrak{a}}{\mathfrak{p} \mathfrak{d}}\right) \\
	&= \frac{(-2\pi i)^{kr}}{2^r(k-1)!^r |D|^{\frac{2k-1}{2}}}  \sum_{\substack{\mathfrak{t}\in K \\ \mathfrak{p} \nmid \mathfrak{t}}}\frac{\mu(\mathfrak{t})}{N(\mathfrak{t})^{k}} \left( - N(\mathfrak{p})^{-k} + \frac{N(\mathfrak{p})-1}{N(\mathfrak{p})}\right).
	\end{align}
	The term $\sum_{\substack{\mathfrak{t}\in K \\ \mathfrak{p} \nmid \mathfrak{t}}}\mu(\mathfrak{t})N(\mathfrak{t})^{-k}$ is the reciprocal of the Dedekind zeta function with the Euler factor at $\mathfrak{p}$ removed, so the previous sum is equal to 
	\begin{equation}
	\frac{(-2\pi i)^{kr}}{2^r(k-1)!^r |D|^{\frac{2k-1}{2}}} \frac{(1-N(\mathfrak{p})^{-k})^{-1}}{\zeta(k)} \left( - N(\mathfrak{p})^{-k} + \frac{N(\mathfrak{p})-1}{N(\mathfrak{p})}\right).
	\end{equation}
	The functional equation of the Dedekind zeta function is given by
	\begin{equation}
	\frac{1}{\zeta(1-k)}=\frac{(-2\pi i )^{kr}}{2^r(k-1)!^r |D|^{\frac{2k-1}{2}}} \frac{1}{\zeta(k)}.
	\end{equation} 
	It directly implies the first sum is equal to
	\begin{equation} \label{eq1}
	\frac{(1-N(\mathfrak{p})^{-k})^{-1}}{\zeta(1-k)} \left( \frac{N(\mathfrak{p})^k - N(\mathfrak{p})^{k-1}-1}{N(\mathfrak{p})^k}\right). 
	\end{equation}
	The second sum is 
	\begin{equation} \label{eq2}
	\frac{N(\mathfrak{p})^{2k-1}}{N(\mathfrak{p})^k-1} \frac{(1-N(\mathfrak{p})^{k-1})^{-1}}{\zeta(1-k)} \frac{N(\mathfrak{p})-1}{N(\mathfrak{p})}.
	\end{equation} 	
	Adding Equations (\ref{eq1}) and (\ref{eq2}) gives
	\begin{equation}
	\lambda_j(\ring_\mathfrak{p}^*) = \frac{(1-N(\mathfrak{p})^{-k})^{-1}}{\zeta(1-k)}.
	\end{equation}
	Summing up over $j$ equivalence classes we get the required result.
	
	Next we consider the case when $\chi$ is a non-trivial Hecke character of finite type with conductor $\mathfrak{p}^m$ and $k$ is an integer of the same parity as $\chi.$ Since $\chi$ is orthogonal to Haar measure, the integral of $\chi$ against $\lambda_{Haar}$ is zero. The integral of $\chi$ against $\lambda_j$ is 
	\begin{multline} \label{eq3}
	\int_{\ring_\mathfrak{p}^*} \chi^{-1} (x) \, d\lambda_j(x) = \frac{(-2\pi i)^{kr}}{(k-1)!^r|D|^{\frac{2k-1}{2}}} \sum_{u=0}^m N(\mathfrak{p})^{u(k-1)-mk} \\ \sum_{i=1}^{h(\mathfrak{p}^m)} \sum_{\mathfrak{t} \in C_i}\frac{\mu(\mathfrak{t})}{N(\mathfrak{t})^{k}} \sum_{\mathfrak{a} \in \left( \ring / \mathfrak{p}^m \right)^\times} \chi (\mathfrak{a})\exp\left(\frac{\mathfrak{t}_i \overline{\mathfrak{a}}\mathfrak{p}^u }{\mathfrak{p}^m \mathfrak{d}}\right).
	\end{multline}
	We define the Gauss sum 
	\begin{equation}
	\tau(\chi)=\sum_{\mathfrak{x} \in \ring / \mathfrak{p}^m} \chi(\mathfrak{x})e^{2\pi i Tr(\mathfrak{x}/\mathfrak{p}^m\mathfrak{d})}
	\end{equation}
	If $\chi$ is primitive and $x \in \mathcal{O},$ then 
	\begin{equation}
	\sum_{\mathfrak{x} \in \ring / \mathfrak{p}^m} \chi(\mathfrak{x})e^{2\pi i Tr(\mathfrak{x}x/\mathfrak{d})}=
	\begin{cases}
	\overline{\chi}(x)\tau(\chi) &\text{ if } \gcd (x,\mathfrak{p}^m)=1 \\
	0 &\text{ otherwise}
	\end{cases} .
	\end{equation} 
	Then the term with $u=0$ is the only non-zero term in Equation (\ref{eq3}), so we have
	\begin{align}
	& \frac{(-2\pi i)^{kr}}{2^r(k-1)!^r N(\mathfrak{p}^m)^{k}|D|^{\frac{2k-1}{2}}} \sum_{i=1}^{h(\mathfrak{p}^m)} \sum_{\mathfrak{t} \in C_i}\frac{\mu(\mathfrak{t})}{N(\mathfrak{t})^{k}} \sum_{\mathfrak{a} \in \left( \ring / \mathfrak{p}^m \right)^\times} \chi (\mathfrak{a})\exp\left(\frac{\mathfrak{t}_i \overline{\mathfrak{a}}}{\mathfrak{p}^m \mathfrak{d}}\right) \\	
	&= \frac{(-2\pi i)^{kr}}{2^r(k-1)!^r N(\mathfrak{p}^m)^{k}|D|^{\frac{2k-1}{2}}} \sum_{\substack{\mathfrak{t}\in K \\ \mathfrak{p} \nmid \mathfrak{t}}} \frac{\mu(\mathfrak{t})}{N(\mathfrak{t})^{k}} \chi(\mathfrak{t})^{-1} \tau ( \overline{\chi} ) \\
	&= \frac{(-2\pi i)^{kr}}{2^r(k-1)!^r N(\mathfrak{p}^m)^{k}|D|^{\frac{2k-1}{2}}} \tau (\overline{\chi}) \frac{1}{L(k,\overline{\chi})}.
	\end{align}  
	The functional equation of the Hecke $L$-function of a totally real field 
	\begin{equation}
	L(k,\chi) = \frac{(2\pi i)^{kr}}{2^r(k-1)!^r |D|^{\frac{2k-1}{2}}N(\mathfrak{p}^m)^{k-1}\tau(\chi^{-1})} L(1-k,\chi^{-1})
	\end{equation}
	along with the relation $\tau(\chi)\tau(\overline{\chi})=\chi(-1)N(\mathfrak{p}^m)$ gives the integral against $\lambda_j$ to be $L(1-k,\chi)^{-1}.$ Summing over the strict ideal classes of $K$ proves the result.
\end{proof}
\section{p-adic Measures of Totally Real Fields} \label{proof}
It remains to show that the distribution $\lambda$ is bounded. It is desirable to prove the boundedness directly by properties of the Eisenstein series. Instead, we show that $p$-adic $L$-function is invertible as an element of the Iwasawa algebra. We first introduce the $p$-adic $L$-function of totally real fields and follow the construction of Deligne and Ribet (\cite{DR}).

Let $\mathfrak{n}$ be a non-zero integral ideal of $\field$ and let $G_{\mathfrak{n}}$ denote the strict ideal class group of $\field$ modulo $\mathfrak{n}$ defined by:  
\begin{equation}
\left\{\text{prime-to-}\mathfrak{n} \text{ fractional ideals of }K\right\} \slash \left\{ \langle \alpha \rangle \mid \alpha \gg 0 \text{ and } \alpha \in 1+ \mathfrak{n}\mathfrak{b}^{-1} \text{ for some ideal } \mathfrak{b} \right\}.
\end{equation}
By Class Field Theory, we can identify $\varprojlim_n G_{\mathfrak{n}p^n}$ with the Galois group $G=G\left(K(\mathfrak{n}p^\infty)/K\right)$ of the largest abelian extension of $\field$ which is unramified at all the finite places of $\field$ prime to $\mathfrak{n}p.$

Let $\chi$ be any $\C$-valued locally constant character of $G_{\mathfrak{n}}.$ We extend the norm map $N:K \rightarrow \Q$ to the unique continuous norm map $G \rightarrow \Z_p^*$ which we again denote by $N$. Siegel (\cite{Siegel}) showed that the values of $L(s,\chi)= \sum_{\gcd(\mathfrak{a},\mathfrak{n})=1}\chi(\mathfrak{a})N(\mathfrak{a})^{-s}$ at negative integers are rational whenever the values of $\chi$ are rational.

We now assume that all the primes lying above $p$ divide $\mathfrak{n}.$ Any locally constant function $\chi: G \to \Q_p$ factorizes through $G_{\mathfrak{n}'}$ for some $\mathfrak{n} \mid \mathfrak{n}',$ so we can define the values $L(1-k,\chi)\in \Q_p$ unambiguously. In particular, for any $c \in G, k \geq 1,$ and $\chi:G_{\mathfrak{n}} \to \Z_p$ the values 
\begin{equation}
\Delta_c(1-k, \chi)=L(1-k,\chi)-N(c)^{k}L(1-k,\chi_c),
\end{equation} 
where $\chi_c$ is the twist of $\chi$ by $c \in G_{\mathfrak{n}},$ are well defined independently of the choice of $\mathfrak{n}$ and define a $p$-adic measure. Explicitly, the following $p$-adic $L$-function exists:
\begin{thm}(\cite{DR} Theorem 8.2)
	Let $\mathfrak{c}$ be an ideal of $K$ which is relatively prime to $p$. Then there exists a $\Z_p$-valued measure $\mu_{1,\mathfrak{c}}$ on $G$ such that for every integer $k \geq 0$ and every locally constant character $\chi, $ we have
	\begin{equation}
	\int_G \chi(x)N(x)^{k-1}\,d\mu_{1,\mathfrak{c}} = (1-\chi(\mathfrak{c})N(\mathfrak{c})^k))L(1-k,\chi).
	\end{equation}
\end{thm}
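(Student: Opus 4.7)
The plan is to follow the strategy of Deligne and Ribet \cite{DR}, which combines Shintani's rational cone decomposition for partial zeta values with the $q$-expansion principle for Hilbert modular forms on an integral model of the Hilbert modular variety. The overall goal is to interpret the regularized $L$-values as constant terms of suitable Hilbert Eisenstein series, and then transfer integrality and congruences from the non-constant Fourier coefficients (which are visibly $p$-integral) to the constant term.

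First, I would produce explicit rational formulas for $L(1-k,\chi)$ using Siegel's rationality theorem together with Shintani's cone decomposition, which expresses each partial zeta value as a finite rational sum of Bernoulli-type quantities indexed by rational cones in $\R^r$. A direct manipulation of these formulas shows that the smoothing combination $(1-\chi(\mathfrak{c})N(\mathfrak{c})^k)L(1-k,\chi)$ lies in $\Z_p$ whenever $\chi$ is $\Z_p$-valued and $\mathfrak{c}$ is prime to $p$ and to the conductor of $\chi$. The factor $(1-\chi(\mathfrak{c})N(\mathfrak{c})^k)$ is engineered precisely to cancel the denominators arising from cones meeting the prime $\primep$; this is the totally real analogue of the classical observation that $(1-c^k)\zeta(1-k)$ is $p$-integral for $c$ coprime to $p$.

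The main obstacle is establishing Kummer-type congruences: if $\chi_1, \chi_2$ are continuous characters of $G$ whose values are sufficiently close $p$-adically, then the associated regularized values must be congruent modulo a high power of $p$. I would follow Deligne and Ribet in realizing a linear combination of the values $L(1-k,\chi)$ as the constant term of a Hilbert Eisenstein series $G_{k,\chi}$ of prime-to-$p$ level, whose non-constant Fourier coefficients are explicit $p$-integral polynomial expressions in the values of $\chi$. Applying the $q$-expansion principle on a sufficiently fine integral model of the Hilbert modular variety (due to Rapoport, and refined by Deligne--Pappas) then transfers congruences between non-constant Fourier coefficients to congruences between constant terms, and hence between the $L$-values. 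This step is delicate because the $q$-expansion principle in the Hilbert setting, unlike the elliptic case, requires careful control over the action of totally positive units on Fourier expansions and a judicious choice of cusp.

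Once these Kummer-type congruences are in place, Mahler's theorem, or equivalently Amice's theory of $p$-adic measures on profinite abelian groups, packages the regularized values $(1-\chi(\mathfrak{c})N(\mathfrak{c})^k)L(1-k,\chi)$ into a unique bounded $\Z_p$-valued measure $\mu_{1,\mathfrak{c}}$ on $G$ whose moments against $\chi \cdot N^{k-1}$ are exactly as stated; the auxiliary ideal $\mathfrak{c}$ plays no role beyond recording which smoothing factor has been applied. The only serious difficulty is the $q$-expansion principle step; the remaining ingredients are classical (Siegel, Shintani) or essentially formal (Mahler).
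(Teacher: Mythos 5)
The paper does not actually prove this statement: it is imported as a black box from Deligne--Ribet (\cite{DR}, Theorem 8.2) and used in Section 5 to establish invertibility in the Iwasawa algebra, so there is no internal argument to compare yours against. Measured against the literature proof, your outline has the right overall shape, but it conflates two historically and technically distinct constructions. The Shintani-cone route (Shintani's explicit formulas for partial zeta values, used by Cassou-Nogues \cite{CN}) yields rationality, integrality, and congruences by elementary $p$-adic analysis of generalized Bernoulli sums; the Deligne--Ribet route uses no cone decomposition at all. In \cite{DR}, \emph{both} the $p$-integrality of $(1-\chi(\mathfrak{c})N(\mathfrak{c})^k)L(1-k,\chi)$ \emph{and} the Kummer-type congruences are extracted from the $q$-expansion principle on the integral Hilbert modular variety: the non-constant Fourier coefficients of the relevant Eisenstein series are finite sums of $\chi(\mathfrak{a})N(\mathfrak{a})^{k-1}$ over integral ideals, hence visibly integral and congruent, and the constant term (which carries the $L$-value) is then controlled by them; Siegel's rationality is the only classical input. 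Your hybrid is workable, but you should commit to one mechanism for integrality rather than invoking both, since otherwise the role of the smoothing factor $(1-\chi(\mathfrak{c})N(\mathfrak{c})^k)$ is explained twice in incompatible ways. Two smaller corrections: the group $G$ here is $G\left(K(\mathfrak{n}p^\infty)/K\right)$ and $\mathfrak{c}$ must be taken prime to $\mathfrak{n}p$, not merely to $p$; and the final packaging step is not Mahler's theorem but the abstract Kummer-congruence criterion, namely that the prescribed values of the putative measure against the dense span of functions $\chi(x)N(x)^{k-1}$ extend to a bounded $\Z_p$-valued measure if and only if the congruences hold --- this is the formal step you correctly identify as routine, while the genuinely hard step is the $q$-expansion principle on the Rapoport (or Deligne--Pappas) model together with the bookkeeping of cusps and the action of totally positive units.
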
       
\subsection{Invertibility of Measures in the Iwasawa Algebra}

We begin by describing the $p$-adic measures on $G$ as elements of the Iwasawa algebra $\Lambda(G).$ We refer the reader to \cite{CoSu} for more details.  

Let $\mu_{p}$ denote the group of all $p^{th}$ roots of unity and let $K^{cyc}$ be the unique $\Z_p$ extension of $K$ in $K(p^\infty)$. The group $G=Gal(K(p^\infty)/K)$ decomposes as $\Delta \times \Gamma,$ where $\Delta=Gal(K(\mu_p)/K)$ and $\Gamma=Gal(K^{cyc}/K) \cong \Z_p.$  

Let $\omega(x)=\lim_{n \to \infty}x^{p^n}$ be the Teichmuller character defined over $\Z_p.$ $\omega \circ N$ defines a character of order $q=[(K(\mu_p):K]$ on integral ideals of $K$ which we will again denote by $\omega.$ An arbitrary element $x \in G$ factors as $\omega(x) \cdot \omega(x)^{-1}x \in \Delta \times \Gamma.$ 

Any measure $\nu$ on $\Gamma$ can be extended to the rest of $G$ by the relation
\begin{equation}
\nu(aU)=\nu(U)
\end{equation}
for any $a \in \Delta$ and compact open $U \subset \Gamma.$ Since each $\omega^{i}$ is a continuous function on $G,$ the product $(\omega^{i}\nu)(aU)$ is a measure on $G$ satisfying
{\bf }
\begin{equation}
(\omega^{i}\nu)(aU)=a^i(\omega^i\nu)(U).
\end{equation}   
On the other hand, every measure $\mu$ on $G$ can be decomposed as a sum of measures of the form $\omega^i\nu_i:$
\begin{equation}
\mu=\frac{1}{q}\sum_{i=1}^{q}\omega^i\nu_i
\end{equation}
by setting $\omega^i\nu_i(U)=\sum_{a\in\Delta}\omega(a)^{-i}\mu(aU).$

For any continuous homomorphism $\chi: G \rightarrow \C_p^*,$ let $\chi|_{\Gamma}$ denote the restriction of $\chi$ to $\Gamma.$ The values of $\chi(\omega(x))$ are $q^{th}$ roots of unity in $\C_p^*,$ and hence $\chi|_{\Gamma}$ has the form $x \to x^{-j}$ for some $j \bmod q.$
Since $\chi$ transforms under $\Delta$ by $\omega^{-j},$ the integral of $\chi$ against $\mu$ only involves the term for $i=j$ and equals
\begin{equation}
\int_G \chi(x)\, d\mu = \frac{1}{q}\int_G\chi(x)\,d(\omega^j\nu_j)=\int_{\Gamma} \chi|_{\Gamma} \, d\nu_j
\end{equation}
reducing the integration of the character $\chi$ on $G$ to that of $\chi|_{\Gamma}$ on $\Gamma.$

Now assume that $\nu_i \equiv 0$ whenever $i$ is even and that $\nu_i|_\Gamma$ is a unit in the Iwasawa algebra $\Lambda(G)$ whenever $i$ is odd. Since $\Lambda(G) \cong \ring [[T]]$ has a formal power structure (\cite{CN} Section 4), the invertibility condition is equivalent to the constant term
\begin{equation} \label{eq8}
(\omega^{i}\nu_i)(\Gamma)=\sum_{a\in \Delta}\omega(a)^{-i}\mu(a\Gamma)=\int_G \omega(x)^{-i} \, d\mu
\end{equation}
being a $p$-adic unit.
In this case, for each odd value of $i \bmod q$, there exists an inverse measure $\nu_i^{-1}$ such that $\nu_i \ast \nu_i^{-1}$ is equal to the $\delta$-distribution $\delta_1$ at the identity by Iwasawa's isomorphism theorem, or in other words
\begin{equation}
\int_\Gamma f(x)\, d(\nu_i \ast \nu_i^{-1})(x)=\int_\Gamma \int_\Gamma f(xy) \, d\nu_i(x) \, d\nu_i^{-1}(y)=f(1)
\end{equation} 
for any continuous $\C_p$-valued function on $\Gamma.$ In particular, when $i$ is odd, 
\begin{equation}
\int_\Gamma \chi|_\Gamma \, d\nu_i^{-1} = \left(\int_\Gamma \chi|_\Gamma \, d\nu_i\right)^{-1}.
\end{equation}
We can then define an inverse measure $\mu^{-1}$ on $G$ by  
\begin{equation}
\mu^{-1}=\frac{1}{q}\sum_{\substack{1 \leq i \leq q \\ i \text{ odd}}} \omega^i\nu_i^{-1}.
\end{equation}
Moreover, evaluating the inverse measure $\mu^{-1}$ against odd characters $\chi,$
\begin{equation}
\int_G \chi(x) \, d\mu^{-1}= \int_\Gamma \chi|_\Gamma \, d\nu_j^{-1}=\left(\int_\Gamma \chi|_\Gamma \, d\nu_j\right)^{-1}=\left(\int_G \chi(x) \, d\mu\right)^{-1},
\end{equation}
we see that the integrals of $\mu$ and $\mu^{-1}$ are reciprocals of each other.

\subsection{Boundedness of $\lambda$}

We will prove that $\lambda$ is a bounded measure by showing that the $p$-adic measure on totally real fields $\mu_{1,\mathfrak{c}}$ is a unit in the Iwasawa algebra and that the regularization $\mu^*$ of its inverse $ \mu_{1,\mathfrak{c}}^{-1}$ is equal as a distribution to a multiple of $\lambda$.
   
We start by considering the $p$-adic measure on totally real fields $\mu_{1,\mathfrak{c}}$. By Equation (\ref{eq8}), $\nu_i$ is invertible whenever the integral
\begin{equation} \label{eq4}
\int_G \omega^{-i}(x) \, d\mu_{1,\mathfrak{c}}
\end{equation}
is a $p$-adic unit. Using the property 
\begin{equation}
\int_G \chi(x)N(x)^{n-1} \, d\mu_{1,\mathfrak{c}}(x)=\left( 1 - \chi(\mathfrak{c}) N(\mathfrak{c})^{n}\right) L(1-n,\chi),
\end{equation}
the integral in (\ref{eq4}) is equal to $\left( 1 - \omega^{-i}(\mathfrak{c})N(\mathfrak{c})\right) L(0,\omega^{-i})$. Kummer's criterion in the case of totally real fields (\cite{Greenberg}) implies that $p$ divides $L(0,\omega^{-i})$ for any odd $1 \leq i \leq q$ if and only if $p$ divides the class number of $K(e^{2\pi i/p}).$ Since $p$ is regular by assumption, this does not happen, so the $\nu_i$ are $p$-adic units.

Therefore, we conclude that the measure $\mu_{1,\mathfrak{c}}^{-1}$ satisfies the property 
\begin{equation}
\int_G \chi(x)N(x)^{k-1} \, d\mu_{1,\mathfrak{c}}^{-1} = (1-\chi(\mathfrak{c}) N(\mathfrak{c})^{k})^{-1} L(1-k,\chi)^{-1}
\end{equation} 
for any Hecke character $\chi$ and non-negative integer $k$ of the same parity. Moreover, by direct computation, the regularization 
\begin{equation} 
\mu^{*}(U)= \mu_{1,\mathfrak{c}}^{-1}(U)-N(\mathfrak{c}) \mu_{1,\mathfrak{c}}^{-1}(\mathfrak{c}U)
\end{equation} 
satisfies 
\begin{equation} \label{eq6}
\int_G \chi(x)N(x)^{k-1} \, d\mu^{*}=L(1-k,\chi)^{-1}.
\end{equation}

We are now ready to prove that $\lambda$ is a measure. 
\begin{thm}
	The distribution $\lambda$ defined in Equation \ref{eq13} is a $p$-adic measure.
\end{thm}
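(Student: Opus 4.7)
The plan is to identify $\lambda$ with a scalar multiple of the bounded distribution $\mu^*$ constructed just above, so that the boundedness of $\mu^*$ transfers to $\lambda$. Recall that $\mu^*$ is already a $p$-adic measure: the preceding analysis showed that $\mu_{1,\mathfrak{c}}$ is a unit in the Iwasawa algebra (verified via Kummer's criterion through Equation (\ref{eq8})), so its inverse $\mu_{1,\mathfrak{c}}^{-1}$ is a bounded measure on $G$, and $\mu^*(U) = \mu_{1,\mathfrak{c}}^{-1}(U) - N(\mathfrak{c})\mu_{1,\mathfrak{c}}^{-1}(\mathfrak{c}U)$ is a $\Z_p$-linear combination of translates of it.

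The key step is to compare Mellin transforms. For every non-trivial Hecke character $\chi$ of finite type with conductor $\primep^m$ and every positive integer $k$ of the same parity as $\chi$, the theorem proved in Section \ref{construction} gives
\begin{equation*}
\int_G \chi(x) N(x)^{k-1}\, d\lambda = h^+ L(1-k,\chi)^{-1},
\end{equation*}
while Equation (\ref{eq6}) gives
\begin{equation*}
\int_G \chi(x) N(x)^{k-1}\, d\mu^* = L(1-k,\chi)^{-1}.
\end{equation*}
For the trivial character one matches the first part of the theorem of Section \ref{construction} with the formula for $\int_G N(x)^{k-1}\, d\mu^*$, keeping track of the Euler factor at $\primep$ that appears from the regularization. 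Together these show that the distribution $\lambda - h^+ \mu^*$ annihilates every function of the form $\chi(x) N(x)^{k-1}$ in the relevant family.

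Finally I would invoke uniqueness: decomposing each distribution along $G = \Delta \times \Gamma$ as in Section 5.1, the $\Gamma$-components correspond via the Iwasawa isomorphism $\Lambda(\Gamma) \cong \ring_{\primep}[[T]]$ to formal power series whose coefficients are recovered from the Mellin values at the characters $\chi N^{k-1}$. This forces $\lambda = h^+ \mu^*$ as distributions on $G$, and in particular shows that $\lambda$ is a bounded measure. The main obstacle I anticipate is the matching at the trivial character: one must verify that the factor $(1 - N(\primep)^{k-1})^{-1}\zeta(1-k)^{-1}$ appearing in Theorem \ref{thm1}(1) agrees with the value of $\int_G N(x)^{k-1}\, d\mu^*$, accounting for both the choice of the auxiliary ideal $\mathfrak{c}$ and the removal of the Euler factor at $\primep$. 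This reduces to a direct unwinding of definitions and of the regularization formula rather than a genuinely new input, with all the substantive work already having been carried out in the invertibility argument.
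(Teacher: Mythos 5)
Your proposal is correct and takes essentially the same route as the paper: both arguments transfer boundedness from $\mu^{*}$ to $\lambda$ by matching Mellin transforms against finite-order Hecke characters and then invoking the fact that these span the locally constant functions on $G$, so the two distributions must agree. The only cosmetic differences are that the paper states the identification as $\overline{\lambda}=h^{+}N(x)^{k-1}\mu^{*}$ with $\overline{\lambda}(U)=\lambda(U^{-1})$, keeping the inversion and the twist by the unit-valued function $N(x)^{k-1}$ explicit (neither of which affects boundedness), and that it concludes by density of locally constant functions rather than by your Iwasawa-algebra coefficient argument.
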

\begin{proof}
	Define a distribution $\overline{\lambda}$ on $G$ by $\overline{\lambda}(U)=\lambda(U^{-1}).$ The integral in Equation (\ref{eq5}) is then equal to $\int_G \chi(x) \, d\overline{\lambda}.$ Moreover, since $\mu^{*}$ is a measure, so is $N(x)^{k-1}\mu^{*}$ for any $k.$ Comparing Equations (\ref{eq5}) and (\ref{eq6}), we see that $\overline{\lambda}-h^+N(x)^{k-1}\mu^{*}$ vanishes when integrated against any Hecke character of finite type whenever $k \geq 3$. Since these span the space of locally constant functions on $G,$ the distribution $\overline{\lambda}-h^+N(x)^{k-1}\mu^{*}$ is identically zero. This implies that $\overline{\lambda}$ is equal to $h^+N(x)^{k-1}\mu^{*}$ as distributions and so it is bounded. This implies that $\lambda$ is a measure. 
\end{proof}

\bibliographystyle{amsalpha}
\bibliography{references}
\end{document}